\newtheorem{theorem}{Theorem}[section]
\newtheorem{lemma}[theorem]{Lemma}
\newtheorem{proposition}[theorem]{Proposition}
\theoremstyle{definition}
\newtheorem{definition}[theorem]{Definition}
\theoremstyle{remark}
\newtheorem{remark}[theorem]{Remark}
\numberwithin{equation}{section}
\begin{document}
\title[Affine IFSs]{A characterization of hyperbolic affine iterated function systems}
\author{Ross Atkins}
\author{Michael F. Barnsley}
\address{Department of Mathematics\\
Australian National University\\
Canberra, ACT, Australia}
\email{michael.barnsley@maths.anu.edu.au, mbarnsley@aol.com}
\urladdr{http://www.superfractals.com{}}
\author{Andrew Vince}
\address{Department of Mathematics\\
University of Florida\\
Gainesville, FL 32611-8105, USA }
\email{avince@math.ufl.edu}
\urladdr{http://www.math.ufl.edu/$\sim$vince/}
\author{David C. Wilson}
\address{Department of Mathematics\\
University of Florida\\
Gainesville, FL 32611-8105, USA }
\email{dcw@math.ufl.edu}
\urladdr{http://www.math.ufl.edu/$\sim$dcw/}
\subjclass[2000]{Primary 54H25, 26A18, 28A80}
\date{June 30, 2009}
\keywords{iterated function systems, affine mappings, hyperbolic IFS, contraction mapping}

\begin{abstract}
The two main theorems of this paper provide a characterization of hyperbolic
affine iterated function systems defined on $\mathbb{R}^{m}.$ Atsushi Kameyama
(\textit{Distances on Topological Self-Similar Sets,} Proceedings of Symposia
in Pure Mathematics, Volume \textbf{72.1,} 2004) asked the following
fundamental question: given a topological self-similar set, does there exist
an associated system of contraction mappings? Our theorems imply an
affirmative answer to Kameyama's question for self-similar sets derived from
affine transformations on $\mathbb{R}^{m}$.

\end{abstract}
\maketitle

\section{Introduction}

The goal of this paper is to prove and explain two theorems that classify
hyperbolic affine iterated function systems defined on $\mathbb{R}^{m}$. One
motivation was the question: when are the functions of an affine iterated
function systems (IFS) on $\mathbb{R}^{m}$ contractions with respect to a
metric equivalent to the usual euclidean metric?

\begin{theorem}
[Classification for Affine Hyperbolic IFSs]\label{maintheorem} If $\mathcal{F}
= \left(  \mathbb{R}^{m};f_{1} ,f_{2},...,f_{N}\right)  $ is an affine
iterated function system, then the following statements are equivalent.

\begin{enumerate}
\item $\mathcal{F}$ is hyperbolic.

\item $\mathcal{F}$ is point-fibred.

\item $\mathcal{F}$ has an attractor.

\item $\mathcal{F}$ is a topological contraction with respect to some convex
body $K \subset\mathbb{R}^{m}$.

\item $\mathcal{F}$ is non-antipodal with respect to some convex body $K
\subset\mathbb{R}^{m}$.
\end{enumerate}
\end{theorem}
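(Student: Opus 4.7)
The plan is to establish the five equivalences via a cycle $(1)\Rightarrow(3)\Rightarrow(2)\Rightarrow(4)\Rightarrow(5)\Rightarrow(1)$, concentrating the real work on the passage from the geometric hypotheses back to the analytic one. The first three implications draw on fairly classical machinery: $(1)\Rightarrow(3)$ is essentially Hutchinson's theorem, since once the $f_i$ are simultaneous Banach contractions in some metric equivalent to the Euclidean one, the induced Hutchinson operator on the hyperspace of nonempty compact sets is itself a contraction and its unique fixed point is the attractor $A$. For $(3)\Rightarrow(2)$, I would use the attractor to show that, for every infinite address $\sigma\in\{1,\dots,N\}^{\mathbb N}$, the nested compact sets $f_{\sigma_1}\!\circ\cdots\circ f_{\sigma_n}(A)$ shrink to a single point, yielding the point-fibred address map.

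For $(2)\Rightarrow(4)$, the plan is to exploit point-fibredness to produce a convex body $K$ with $\bigcup_i f_i(K)\subset\operatorname{int}(K)$. A natural candidate is a sufficiently large Euclidean ball (or the closed convex hull of an appropriate forward-invariant set), enlarged so that each affine map, which eventually drags every bounded set toward the attractor, sends $K$ strictly inside itself after sufficiently many iterates; a gauge-function or averaging argument over the finitely many $f_i$ then produces a single $K$ that works in one step. The step $(4)\Rightarrow(5)$ should be essentially by definition: a topological contraction sends $\partial K$ into $\operatorname{int}(K)$, so in particular no pair of antipodal boundary points can be mapped to another antipodal pair, giving the non-antipodal property.

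The crux is $(5)\Rightarrow(1)$, which is where I expect the main obstacle to lie. The strategy is to use the convex body $K$ (after translating to place the center of symmetry at the origin, or by passing to the symmetrized body $\tfrac12(K-K)$) to define a Minkowski-type norm $\|\cdot\|_K$ on $\mathbb R^m$, and then to prove that the non-antipodal hypothesis forces each linear part of $f_i$ to have operator norm strictly less than $1$ in this norm. The non-antipodal condition is precisely what rules out the degenerate scenario in which some $f_i$ preserves a full diameter of $K$; in its absence one only gets $\|\cdot\|_K$-nonexpansiveness.

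The main difficulty is promoting the \emph{pointwise} strict decrease guaranteed by non-antipodality on the compact set $\partial K$ to a \emph{uniform} contraction ratio $s<1$ that works simultaneously for all $N$ maps. I would handle this via a compactness argument on $\partial K\times\partial K$ combined with continuity of the affine maps, exploiting the finiteness of the IFS to take a maximum over $i$. Once a uniform $s<1$ is obtained, the norm $\|\cdot\|_K$ (being equivalent to the Euclidean norm, since $K$ is a convex body with nonempty interior) yields an equivalent metric in which every $f_i$ is a contraction, closing the cycle.
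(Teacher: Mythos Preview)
Your cycle and the identification of $(5)\Rightarrow(1)$ as the crux match the paper, but two steps are missing real content.

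In $(5)\Rightarrow(1)$ you describe non-antipodality as ``ruling out the scenario in which some $f_i$ preserves a full diameter of $K$,'' and you plan to work with the Minkowski norm on $\tfrac12(K-K)$. But in the paper an \emph{antipodal} pair is one lying on opposite parallel supporting hyperplanes of $K$, whereas a \emph{diametric} pair is one realizing the maximal chord length in its direction. The Minkowski gauge $\|\cdot\|_{K-K}$ detects the latter: $x\in\partial(K-K)$ exactly when $x=p-q$ with $(p,q)$ diametric in $K$. So to pass from ``$f_i$ sends no antipodal pair to an antipodal pair'' to ``$L_i$ sends $\partial(K-K)$ into $\mathrm{int}(K-K)$'' you must know that antipodal pairs and diametric pairs coincide for an arbitrary convex body. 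The paper proves this as a separate result (their Theorem~6.4, using a degree argument on $S^{m-1}$ for the nontrivial inclusion) and explicitly flags it as the key fact enabling the remetrization. Without it your argument stalls. Once that lemma is in hand, by contrast, the upgrade from strict to uniform contraction is immediate from compactness of $K-K$; that is not where the difficulty lies.

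For $(2)\Rightarrow(4)$, neither a large Euclidean ball nor $\mathrm{conv}(A)$ will work in one step (cf.\ the paper's Example~3.1: a single linear map with eigenvalues $\pm\tfrac12$ that expands the unit disk). The paper first obtains a convex body $K_1$ and an integer $t$ with $\mathcal{F}^{\circ t}(K_1)\subset\mathrm{int}(K_1)$, which is easy from the attractor, and then collapses from $t$ steps to one via the Minkowski sum
\[
K_2=\sum_{k=0}^{t-1}\Bigl(\mathrm{conv}\,\mathcal{F}^{\circ k}(K_1)-\mathrm{conv}\,\mathcal{F}^{\circ k}(K_1)\Bigr),
\]
the IFS analogue of the adapted-norm trick $\|x\|_{\mathrm{new}}=\sum_{k<t}\|L^kx\|$ for a single operator with spectral radius below one. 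Your phrase ``a gauge-function or averaging argument'' is pointing in the right direction but does not supply this construction, which is the substance of the step. Your reordering $(1)\Rightarrow(3)\Rightarrow(2)$ in place of the paper's $(1)\Rightarrow(2)\Rightarrow(3)$ is harmless; your direct $(3)\Rightarrow(2)$ can indeed be justified by applying the attractor hypothesis to balls of radius $R\to\infty$ to force $\|L_{\sigma|k}\|\to 0$ uniformly in $\sigma$, though this also deserves a line of argument rather than the bare assertion that the nested images shrink.
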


Statement (1) is a metric condition on an affine IFS, statements (2) and (3)
are in terms of convergence, and statements (4) and (5) are in terms of
concepts from convex geometry. The terms contractive, hyperbolic,
point-fibred, attractor, topological contraction, and non-antipodal are
defined in Definitions \ref{contractiveIFSDef}, \ref{hyperbolicIFSDef},
\ref{point-fibredDef}, \ref{attractorIFSDef},
\ref{topologicalContractiveIFSDef}, \ref{nonantipodalIFSDef}, respectively.
This theorem draws together some of the main concepts in the theory of
iterated function systems. Banach's classical Contraction Mapping Theorem
states that a contraction $f$ on a complete metric space has a fixed point
$x_{0}$ and that $x_{0} = \lim_{k\rightarrow\infty} f^{\circ k} (x)$,
independent of $x$, where $\circ k$ denotes the $k^{th}$ iteration. The notion
of hyperbolic generalizes to the case of an IFS the contraction property,
namely an IFS is hyperbolic if there is a metric on $\mathbb{R}^{m},$
equivalent to the usual one, such that each function in the IFS is a
contraction. The notion of point-fibred, introduced by Kieninger
\cite{kieninger}, is the natural generalization of the limit condition above
to the case of an IFS. While traditional discussions of fractal geometry focus
on the existence of an attractor for a hyperbolic IFS,
Theorem~\ref{maintheorem} establishes that the more geometrical (and
non-metric) assumptions - topologically contractive and non-antipodal - can
also be used to guarantee the existence of an attractor. Basically a function
$f\,: \, \mathbb{R}^{m} \rightarrow\mathbb{R}^{m}$ is non-antipodal if certain
pairs of points (antipodal points) on the boundary of $K$ are not mapped by
$f$ to another pair of antipodal points.

Since the implication $(1) \Rightarrow(2)$ is the Contraction Mapping Theorem
when the IFS contains only one affine mapping, Theorem~\ref{maintheorem}
contains an affine IFS version of the converse to the Contraction Mapping
Theorem. Thus, our theorem provides a generalization of results proved by L.
Janos \cite{janos} and S. Leader \cite{leader}. Such a converse statement in
the IFS setting has remained unclear until now.

Although not every affine IFS $\mathcal{F} = \left(  \mathbb{R}^{m};f_{1}
,f_{2},...,f_{N}\right)  $ is hyperbolic on all of $\mathbb{R}^{m}$, the
second main result states that if $\mathcal{F}$ has a coding map
(Definition~\ref{codingMapDef}), then $\mathcal{F}$ is always hyperbolic on
some affine subspace of $\mathbb{R}^{m}$.

\begin{theorem}
\label{kameyamaTheorem} If $\mathcal{F}= \left(  \mathbb{R}^{m}; f_{1}
,f_{2},...,f_{N}\right)  $ is an affine IFS with a coding map $\pi:
\Omega\rightarrow\mathbb{R}^{m}$, then $\mathcal{F}$ is hyperbolic on the
affine hull of $\pi(\Omega)$. In particular, if $\pi(\Omega)$ contains a
non-empty open subset of $\mathbb{R}^{m}$, then $\mathcal{F}$ is hyperbolic on
$\mathbb{R}^{m}$.
\end{theorem}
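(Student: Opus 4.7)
The plan is to verify the point-fibred condition (statement (2) of Theorem~\ref{maintheorem}) for the restriction of $\mathcal{F}$ to $A = \mathrm{aff}(\pi(\Omega))$, and then invoke Theorem~\ref{maintheorem} to conclude hyperbolicity. Writing $X = \pi(\Omega)$, the semiconjugacy $\pi\circ\sigma_i = f_i\circ\pi$ gives $f_i(X)\subseteq X$; since affine maps commute with taking affine hulls, $f_i(A) = \mathrm{aff}(f_i(X)) \subseteq A$, so $\mathcal{F}|_A$ is a bona fide affine IFS on the affine space $A$. The ``in particular'' clause is then immediate, since $\pi(\Omega)$ containing a non-empty open subset of $\mathbb{R}^{m}$ forces $A = \mathbb{R}^{m}$.

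Point-fibredness on $A$ amounts to showing that for every $x\in A$ and every $\omega\in\Omega$ the iterates $f_{\omega_1}\circ\cdots\circ f_{\omega_k}(x)$ converge as $k\to\infty$ to a limit depending only on $\omega$, continuously in $\omega$; continuity in $\omega$ is built into the hypothesis on $\pi$, and the limit will be $\pi(\omega)$. Writing $f_i(x) = L_ix + b_i$ and iterating the identity $\pi(i\omega) = f_i(\pi(\omega))$ yields $\pi(\omega) = f_{\omega_1}\circ\cdots\circ f_{\omega_k}(\pi(\sigma^k\omega))$, so
\[
f_{\omega_1}\circ\cdots\circ f_{\omega_k}(x) - \pi(\omega) \;=\; L_{\omega_1}\cdots L_{\omega_k}\bigl(x-\pi(\sigma^k\omega)\bigr).
\]
Because $\pi(\sigma^k\omega)\in X$ is bounded in $k$, the convergence reduces to the claim that the operator norm of $L_{\omega_1}\cdots L_{\omega_k}$, measured on the linear subspace $V$ parallel to $A$, tends to zero uniformly in $\omega$.

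This uniform contraction is the technical heart, and is extracted from the coding map as follows. Applying the semiconjugacy $k$ times to $X$ gives the key identity
\[
f_{\omega_1}\circ\cdots\circ f_{\omega_k}(X) \;=\; \pi\bigl([\omega_1\cdots\omega_k]\bigr),
\]
where $[\omega_1\cdots\omega_k]\subset\Omega$ is the cylinder of sequences whose first $k$ symbols are those of $\omega$. Since $\Omega$ is compact and $\pi$ is continuous, $\pi$ is uniformly continuous, and cylinder diameters in $\Omega$ shrink to zero with $k$, so $\sup_{\omega}\mathrm{diam}\bigl(f_{\omega_1}\circ\cdots\circ f_{\omega_k}(X)\bigr)\to 0$. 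As translation preserves diameters, this supremum equals $\sup_{v\in X-X}\|L_{\omega_1}\cdots L_{\omega_k}(v)\|$. The hypothesis that $X$ affinely spans $A$ means $X-X$ linearly spans $V$, so its convex hull is a symmetric convex body in $V$, and by linearity the uniform contraction on $X-X$ promotes to uniform contraction on the whole of $V$; this completes the verification of point-fibredness.

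The main obstacle, and really the only substantive step, is the bridge from the topological hypothesis (continuity of a coding map on the compact code space $\Omega$) to a uniform metric contraction rate for the linear parts $L_{\omega_1}\cdots L_{\omega_k}$ on $V$. The bridge is supplied by the identity $f_{\omega_1}\circ\cdots\circ f_{\omega_k}(X) = \pi([\omega_1\cdots\omega_k])$, together with the affine-spanning property of $X$ in $A$. Once point-fibredness on $A$ is established, Theorem~\ref{maintheorem} furnishes a metric on $A$, equivalent to the restriction of the Euclidean metric, under which every $f_i$ is a strict contraction, which is precisely hyperbolicity on $A$.
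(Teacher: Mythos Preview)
Your proposal is correct and follows the same overall architecture as the paper: restrict to $A=\mathrm{aff}(\pi(\Omega))$, verify that $\mathcal{F}|_A$ is point-fibred, and then invoke Theorem~\ref{maintheorem}. The tactic for the point-fibred step differs slightly. The paper (Proposition~\ref{lem1} and Theorem~\ref{kameyamatheorem}) argues pointwise: for $x\in\pi(\Omega)$ one writes $x=\pi(\tau)$ and uses the semiconjugacy to get $f_{\sigma|k}(x)=\pi(\sigma_1\cdots\sigma_k\tau)\to\pi(\sigma)$ directly from continuity of $\pi$, and then extends to $\mathrm{aff}(\pi(\Omega))$ by writing a general point as an affine combination of points of $\pi(\Omega)$ and using affinity of the $f_{\sigma|k}$. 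You instead pass through the linear parts, using uniform continuity of $\pi$ to force $\mathrm{diam}\,\pi([\omega_1\cdots\omega_k])\to 0$ uniformly, which yields $\|L_{\omega_1}\cdots L_{\omega_k}|_V\|\to 0$ because $X-X$ spans $V$. The paper's route is a bit shorter and avoids operator norms; your route is more quantitative and in effect establishes that the joint spectral radius of $\{L_i|_V\}$ is strictly less than one, which connects transparently with the equivalence $(6)$ discussed in Section~8.
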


Although he used slightly different terminology, Kameyama \cite{kameyama}
posed the following \textit{FUNDAMENTAL\ QUESTION}: \textit{Is an affine IFS
with a coding map }$\pi:\Omega\rightarrow\mathbb{R}^{m}$\textit{ hyperbolic
when restricted to }$\pi(\Omega)$\textit{?} An affirmative answer to this
question follows immediately from Theorem \ref{kameyamaTheorem}.

Our original motivation, however, was not Kameyama's question, but rather a
desire to approximate a compact subset $T\subset\mathbb{R}^{m}$ as the
attractor $A$ of an iterated function system $\mathcal{F}=(\mathbb{R}^{m};
f_{1},f_{2},...f_{N}),$ where each $f_{n}:\mathbb{R}^{m}\rightarrow
\mathbb{R}^{m}$ is affine. This task is usually done using the ``collage
theorem" \cite{barnsleynotices}, \cite{barnsleyPNAS} by choosing an IFS
$\mathcal{F}$ so that the Hausdorff distance $d_{\mathbb{H} }(T,\mathcal{F}%
\left(  T\right)  )$ is small. If the IFS $\mathcal{F}$ is hyperbolic, then we
can guarantee it has an attractor $A$ such that $d_{\mathbb{H}}(T,A)$ is
comparably small. But then the question arises: how does one know if
$\mathcal{F}$ is hyperbolic?

The paper is organized as follows. Section 2 contains notation, terminology,
and definitions that will be used throughout the paper. Section 3 contains
examples and remarks relating iterated function systems and their attractors
to Theorems \ref{maintheorem} and \ref{kameyamaTheorem}. In Example 3.1 we
show that an affine IFS can be point-fibred, but not contractive under the
usual metric on $\mathbb{R}^{m}$. Thus, some kind of remetrization is required
for the system to be contractive. In Example 3.2 we show that an affine IFS
can contain two linear maps each with real eigenvalues all with magnitudes
less than 1, but still may not be point-fibered. Thus, Theorem
\ref{maintheorem} cannot be phrased only in terms of eigenvalues and
eigenvectors of the individual functions in the IFS. Indeed, in Example 3.3 we
explain how, given any integer $M>0$, there exists a linear IFS $\left(
\mathbb{R}^{2};L_{1},L_{2}\right)  $ such that each operator of the form
$L_{\sigma_{1}}L_{\sigma_{2}}...L_{\sigma_{k}}$, with $\sigma_{j}\in\{1,2\}$
for $j=1,2,...,k$, and $k\leq M,$ has spectal radius less than one, while
$L_{1}L_{2}^{M}$ has spectral radius larger than one. This is related to the
joint spectral radius \cite{strang} of the pair of linear operators
$L_{1},L_{2}$ and to the associated finiteness conjecture, see for example
\cite{theys}. In Section 8 we comment on the relationship between the present
work and recent results concerning the joint spectral radius of finite sets of
linear operators. Example 3.4 provides an affine IFS on $\mathbb{R}^{2}$ that
has a coding map $\pi$, but is not point-fibred on $\mathbb{R}^{2}$, and hence
by Theorem~\ref{maintheorem}, not hyperbolic on $\mathbb{R}^{2}$. It is,
however, point-fibred and hyperbolic when restricted to the $x$-axis, which is
the affine hull of $\pi(\Omega)$, thus illustrating
Theorem~\ref{kameyamaTheorem}.

For the proof of Theorem \ref{maintheorem} we provide the following roadmap.

\begin{enumerate}
\item The proof that statement (1) $\Rightarrow$ statement (2) is provided in
Theorem \ref{1Implies2Theorem}.

\item The proof that statement (2) $\Rightarrow$ statement (3) is provided in
Theorem \ref{2Implies3Theorem}.

\item The proof that statement (3) $\Rightarrow$ statement (4) is provided in
Theorem \ref{3Implies4Theorem}.

\item The proof that statement (4) $\Rightarrow$ statement (5) is provided in
Proposition \ref{topContractImpliesnonAntpodalProp}.

\item The proof that statement (5) $\Rightarrow$ statement (1) is provided in
Theorem~\ref{5Implies1Theorem}.
\end{enumerate}

Theorem~\ref{kameyamaTheorem} is proved in section 7.

\section{Notation and Definitions}

We treat $\mathbb{R}^{m}$ as a vector space, an affine space, and a metric
space. We identify a point $x = (x_{1},x_{2},...,x_{m})\in\mathbb{R}^{m}$ with
the vector whose coordinates are $x_{1},x_{2},...,x_{m}$. We write
$0\in\mathbb{R}^{m}$ for the point in $\mathbb{R}^{m}$ whose coordinates are
all zero. The standard basis is denoted $\{e_{1}, e_{2}, \dots, e_{m}\}$. The
inner product between $x,y\in\mathbb{R}^{m}$ is denoted by $\langle
x,y\rangle$. The $2$-norm of a point $x\in\mathbb{R}^{m}$ is $\left\Vert
x\right\Vert _{2} = \sqrt{\langle x,x\rangle}$, and the euclidean metric
$d_{E}:\mathbb{R}^{m}\times\mathbb{R}^{m}\rightarrow\lbrack0,\infty)$ is
defined by $d_{E}(x,y)= \left\Vert x-y\right\Vert _{2} \text{ for all }%
x,y\in\mathbb{R}^{m}$. The following notations, conventions, and definitions
will also be used throughout this paper:

\begin{enumerate}
\item A convex body is a compact convex subset of $\mathbb{R}^{m}$ with
non-empty interior.

\item For a set $B$ in $\mathbb{R}^{m}$, the notation $conv(B)$ is used to
denote the convex hull of $B$.

\item For a set $B \in\mathbb{R}^{m}$, the \textit{affine hull}, denoted
$\text{aff}( B )$, of $B$ is the smallest affine subspace containing $B$,
i.e., the intersection of all affine subspaces containing $B$.

\item The symbol $\mathbb{H}$ will denote the nonempty compact subsets of
$\mathbb{R}^{m}$, and the symbol $d_{\mathbb{H}}$ will denote the Hausdorff
metric on $\mathbb{H}$. Recall that $(\mathbb{R}^{m} ,d_{\mathbb{H}})$ is a
complete metric space.

\item A metric $d$ on $\mathbb{R}^{m}$ is said to be \textit{Lipschitz
equivalent} to $d_{E}$ if there are positive constants $r$ and $R$ such that
\[
r \, d_{E}(x,y) \leq d(x,y) \leq R \, d_{E}(x,y),
\]
for all $x,y \in\mathbb{R}^{m}$. If two metrics are Lipschitz equivalent, then
they induce the same topology on $\mathbb{R}^{m}$, but the converse is not
necessarily true.

\item For any two subsets $A$ and $B$ of $\mathbb{R}^{m}$ the notation $A - B
:= \{x -y \, : x \in A \text{ and }y \in B\}$ is used to denote the pointwise
subtraction of elements in the two sets.

\item For a positive integer $N,$ the symbol $\Omega= \{1,2, \dots
,N\}^{\infty} $ will denote the set of all infinite sequences of symbols
$\{\sigma_{k}\}_{k=1}^{\infty}$ belonging to the alphabet $\{1,2, \dots,N\}$.
The set $\Omega$ is endowed with the product topology. An element of
$\sigma\in\Omega$ will also be denoted by the concatenation $\sigma=
\sigma_{1}\sigma_{2}\sigma_{3} \dots$, where $\sigma_{k}$ denotes the $k^{th}$
component of $\sigma$. Recall that since $\Omega$ is endowed with the product
topology, it is a compact Hausdorff space.
\end{enumerate}

\begin{definition}
[IFS]If $N>0$ is an integer and $f_{n} :\mathbb{R}^{m} \rightarrow
\mathbb{R}^{m}$, $n=1,2, \dots, N,$ are continuous mappings, then
${\mathcal{F}} = \left(  \mathbb{R}^{m}; f_{1} ,f_{2},...,f_{N}\right)  $ is
called an \textit{iterated function system} (IFS). If each of the functions in
${\mathcal{F}}$ is an affine map on $\mathbb{R}^{m}$, then ${\mathcal{F}}$ is
called an \textit{affine IFS}.
\end{definition}

\begin{definition}
[Contractive IFS]\label{contractiveIFSDef} An IFS $\mathcal{F} = \left(
\mathbb{R}^{m}; f_{1} ,f_{2},...,f_{N}\right)  $ is \textit{contractive} when
each $f_{n}$ is a contraction. Namely, there is a number $\alpha_{n}\in
\lbrack0,1)$ such that $d_{E}(f_{n}(x),f_{n}(y)) \leq\alpha_{n}d_{E}(x,y)$ for
all $x,y\in\mathbb{R}^{m}$, for all $n$.
\end{definition}

\begin{definition}
[Hyperbolic IFS]\label{hyperbolicIFSDef} An IFS $\mathcal{F} = \left(
\mathbb{R}^{m}; f_{1} ,f_{2},...,f_{N}\right)  $ is called \textit{hyperbolic}
if there is a metric on $\mathbb{R}^{m}$ Lipschitz equivalent to the given
metric so that each $f_{n}$ is a contraction.
\end{definition}

\begin{definition}
[Coding Map]\label{codingMapDef} A continuous map $\pi:\Omega\rightarrow
\mathbb{R}^{m}$ is called a \textit{coding map} for the IFS $\mathcal{F} =
\left(  \mathbb{R}^{m}; f_{1} ,f_{2},...,f_{N}\right)  $ if, for each
$n=1,2,\dots, N,$ the following diagram commutes,
\begin{equation}%
\begin{array}
[c]{ccc}%
\Omega & \overset{s_{n}}{\rightarrow} & \Omega\\
\pi\downarrow\text{\ \ \ \ } &  & \text{ \ \ \ }\downarrow\pi\\
\mathbb{R}^{m} & \underset{f_{n}}{\rightarrow} & \mathbb{R}^{m}%
\end{array}
\label{commutediagram}%
\end{equation}
where the symbol $s_{n}:\Omega\rightarrow\Omega$ denotes the inverse shift map
defined by $s_{n}(\sigma)=n \sigma$.
\end{definition}

The notion of a coding map is due to J. Kigami \cite{kigami} and A. Kameyama
\cite{kameyama}.

\begin{definition}
[Point-Fibred IFS]\label{point-fibredDef} An IFS $\mathcal{F} = \left(
\mathbb{R}^{m}; f_{1} ,f_{2},...,f_{N}\right)  $ is \textit{point-fibred} if,
for each $\sigma= \sigma_{1}\,\sigma_{2} \, \sigma_{3} \cdots\in\Omega,$ the
limit on the right hand side of
\begin{equation}
\label{limitdef}\pi(\sigma) := \lim_{k\rightarrow\infty}f_{\sigma_{1}}\circ
f_{\sigma_{2}} \circ\cdots\circ f_{\sigma_{k}}(x),
\end{equation}
exists, is independent of $x \in\mathbb{R}^{m}$ for fixed $\sigma$, and the
map $\pi:\Omega\rightarrow\mathbb{R}^{m}$ is a coding map.
\end{definition}

It is not difficult to show that \ref{limitdef} is the unique coding map of a
point-fibred IFS. Our notion of a point-fibred iterated function system is
similar to Kieninger's Definition 4.3.6 \cite{kieninger}, p.97. However, we
work in the setting of complete metric spaces whereas Kieninger frames his
definition in a compact Hausdorff space.

\begin{definition}
[The Symbol $\mathcal{F}(B)$ for an IFS]\label{decompositionIFSDef} For an IFS
$\mathcal{F} = \left(  \mathbb{R}^{m}; f_{1} ,f_{2},...,f_{N}\right)  $ define
$\mathcal{F} :\mathbb{H\rightarrow}\mathbb{H}$ by
\[
\mathcal{F}(B)= \bigcup_{n=1}^{N} f_{n}(B).
\]
(We use the same symbol $\mathcal{F}$ both for the IFS and the mapping.) For
$B \in\mathbb{H}$, let $\mathcal{F}^{\circ k}(B)$ denote the $k$-fold
composition of $\mathcal{F}$, i.e., the union of $f_{\sigma_{1}}\circ
f_{\sigma_{2}}\circ\cdots\circ f_{\sigma_{k}} (B)$ over all words $\sigma
_{1}\sigma_{2}\cdots\sigma_{k}$ of length $k$.
\end{definition}

\begin{definition}
[Attractor for an IFS]\label{attractorIFSDef} A set $A \in\mathbb{H}$ is
called an \textit{attractor} of an IFS $\mathcal{F} = \left(  \mathbb{R}^{m};
f_{1} ,f_{2},...,f_{N}\right)  $ if
\begin{equation}
A = \mathcal{F}(A) \label{selfrefeq}%
\end{equation}
and
\begin{equation}
A = \lim_{k\rightarrow\infty} \mathcal{F}^{\circ k}(B), \label{convergeq}%
\end{equation}
the limit with respect to the Hausdorff metric, for all $B\in\mathbb{H}$.
\end{definition}

If an IFS has an attractor $A$, then clearly $A$ is the unique attractor. It
is well known that a hyperbolic IFS has an attractor. An elegant proof of this
fact is given by J. Hutchinson \cite{hutchinson}. He observes that a
contractive IFS $\mathcal{F}$ induces a contraction $\mathcal{F}%
:\mathbb{H\rightarrow H}$, from which the result follows by the contraction
mapping theorem. See also M. Hata \cite{hata} and R. F. Williams
\cite{williams}.

In section 4 it is shown that a point-fibred IFS $\mathcal{F}$ has an
attractor $A$, and, moreover, if $\pi$ is the coding map of $\mathcal{F}$,
then $A = \pi(\Omega)$. Often $\sigma$ is considered as the ``address'' of the
point $\pi(\sigma)$ in the attractor. In the literature on fractals (for
example J. Kigami \cite{kigami}) there is an approach to the concept of a
self-similar system without reference to the ambient space. This approach
begins with the idea of a continuous coding map $\pi$ and, in effect, defines
the attractor as $\pi(\Omega)$.

\section{Examples and Remarks on Iterated Function Systems}

This section contains examples and remarks relevant to
Theorems~\ref{maintheorem} and \ref{kameyamaTheorem}.\bigskip

\noindent\textbf{EXAMPLE 3.1} [A Point-fibred, not Contractive IFS] Consider
the affine IFS consisting of a single linear function on $\mathbb{R}^{2}$
given by the matrix
\[
f=%
\begin{pmatrix}
0 & 2\\
\frac{1}{8} & 0
\end{pmatrix}
.
\]
Note that the eigenvalues of $f$ equal $\pm\frac{1}{2}$. Since
\[
\lim_{n\rightarrow\infty}f^{\circ2n}=\lim_{n\rightarrow\infty}T^{-1}%
\begin{pmatrix}
(\frac{1}{2})^{n} & 0\\
0 & (-\frac{1}{2})^{n}%
\end{pmatrix}
T=%
\begin{pmatrix}
0 & 0\\
0 & 0
\end{pmatrix}
,
\]
where $T$ is the change of basis matrix, this IFS is point-fibred. However,
since
\[
f%
\begin{pmatrix}
0\\
1
\end{pmatrix}
=%
\begin{pmatrix}
2\\
0
\end{pmatrix}
,
\]
the mapping is not a contraction under the usual metric on $\mathbb{R}^{2}$.
Theorem \ref{maintheorem}, however, guarantees we can remetrize $\mathbb{R}%
^{2}$ with an equivalent metric so that $f$ is a contraction.\bigskip

\noindent\textbf{EXAMPLE 3.2} [An IFS with Point-Fibred Functions that is not
Point-Fibred] In the literature on affine iterated function systems, it is
sometimes assumed that the eigenvalues of the linear parts of the affine
functions are less than $1$ in modulus. Unfortunately, this assumption is not
sufficient to imply any of the five statements given in
Theorem~\ref{maintheorem}. While the affine IFS $(\mathbb{R}^{m};f)$ is
point-fibred if and only if the eigenvalues of the linear part of $f$ all have
moduli strictly less than $1$, an analogous statement cannot be made if the
number of functions in the IFS is larger than $1$.

Consider the affine IFS $\mathcal{F} = \left(  \mathbb{R}^{2}; f_{1} ,f_{2}
\right)  ,$ where
\[
f_{1} =
\begin{pmatrix}
0 & 2\\
\frac18 & 0
\end{pmatrix}
\qquad\hbox{and} \qquad f_{2} =
\begin{pmatrix}
0 & \frac18\\
2 & 0
\end{pmatrix}
.
\]

\noindent As noted in Example 3.1%
\[
\lim_{n\rightarrow\infty}f_{1}^{\circ n}\mathbf{u}=\lim_{n\rightarrow\infty
}f_{2}^{\circ n}\mathbf{u}=%
\begin{pmatrix}
0\\
0
\end{pmatrix}
\]
for any vector $\mathbf{u}$. Thus, both $\mathcal{F}_{1}=\left(
\mathbb{R}^{2};f_{1}\right)  $ and $\mathcal{F}_{2}=\left(  \mathbb{R}%
^{2};f_{2}\right)  $ are point-fibred. Unfortunately, their product is the
matrix
\[
f_{1}\circ f_{2}=%
\begin{pmatrix}
4 & 0\\
0 & \frac{1}{64}%
\end{pmatrix}
,\quad\text{so that}\quad\lim_{n\rightarrow\infty}(f_{1}\circ f_{2})^{\circ n}%
\begin{pmatrix}
1\\
0
\end{pmatrix}
=\lim_{n\rightarrow\infty}%
\begin{pmatrix}
4^{n}\\
0
\end{pmatrix}
=+\infty.
\]
Thus, the IFS $\mathcal{F}=\left(  \mathbb{R}^{2};f_{1},f_{2}\right)  $ fails
to be point-fibred.

\begin{remark}
\label{converseRemark}
\end{remark}

While it is true that $(1)\Rightarrow(2)$ in Theorem~\ref{maintheorem} even
without the assumption that the IFS is affine, the converse is not true in
general. Kameyama \cite{kameyama} has shown that there exists a point-fibred
IFS that is not hyperbolic. We next give an example of an affine IFS with a
coding map that is not point-fibred. Thus, the set of IFSs (with a coding map)
strictly contains the set of point-fibred IFSs which, in turn, strictly
contains the set of hyperbolic IFSs.\bigskip

\noindent\textbf{EXAMPLE 3.3 [}The Failure of a Finite Eigenvalue Test to
Imply Point-Fibred] Consider the linear IFS $\mathcal{F}=\left(
\mathbb{R}^{2};L_{1},L_{2}\right)  ,$ where
\[
L_{1}=%
\begin{pmatrix}
0 & 2\\
\frac{1}{8} & 0
\end{pmatrix}
\qquad\hbox{and}\qquad L_{2}=%
\begin{pmatrix}
a\cos\theta & -a\sin\theta\\
a\sin\theta & a\cos\theta
\end{pmatrix}
=aR_{\theta},
\]
where $R_{\theta}$ denotes rotation by angle $\theta$, and $0<$ $a<1.$ Then
$L_{1}^{n}$ has eigenvalues $\pm1/2^{n}$ while the eigenvalues of $L_{2}^{n}$
both have magnitude $a^{n}<1$. For example, if we choose $\theta=\pi/8$ and
$a=31/32$ then it is readily verified that the eigenvalues of $L_{1}L_{2}$ and
$L_{2}L_{1}$ are smaller than one in magnitude and that one of the eigenvalues
of $L_{1}L_{2}L_{2}$ is $1.4014...$ . Hence, in this case, the magnitudes of
the eigenvalues of the linear operators $L_{1},$ $L_{2},$ $L_{1}^{2},$
$L_{1}L_{2},$ $L_{2}L_{1},$ $L_{2}^{2}$ are all less than one, but $\left\Vert
\left(  L_{1}L_{2}L_{2}\right)  ^{n}x\right\Vert $ does not converge when
$x\in\mathbb{R}^{2}$ is any eigenvector of $L_{1}L_{2}L_{2}$ corresponding to
the eigenvalue $1.4014...$. It follows that the IFS $\left(  \mathbb{R}%
^{2};L_{1},L_{2}\right)  $ is not point-fibred. By using the same underlying
idea it is straightforward to prove that, given any positive integer $M$, we
can choose $a$ close to $1$ and $\theta$ close to $0$ in such a way that the
eigenvalues of $L_{\sigma_{1}}L_{\sigma_{2}}...L_{\sigma_{k}},$ (where
$\sigma_{j}\in\{1,2\}$ for $j=1,2,...,k$, with $k\leq M$) are all of magnitude
less than one, while $L_{1}L_{2}^{M}$ has an eigenvalue of magnitude larger
than one.\bigskip

\noindent\textbf{EXAMPLE 3.4} [A non-Hyperbolic Affine IFS] Let $\mathcal{F}%
=\left(  \mathbb{R}^{2};f_{0},f_{1}\right)  ,$ where
\[
f_{0}(x_{1},x_{2})=(\frac{1}{2}\,x_{1},x_{2}),\qquad\qquad f_{1}(x_{1}%
,\,x_{2})=(\frac{1}{2}\,x_{1}+\frac{1}{2},\,x_{2}).
\]

\noindent This IFS has a coding map $\pi$ with $\Omega=\{0,1\}^{\infty}$ and
$\pi(\sigma)=(0.\sigma,0),$ where $0.\sigma$ is considered as a base 2
decimal. Since $\lim_{k\rightarrow\infty}f_{\sigma_{1}}\circ f_{\sigma_{2}%
}\circ\cdots\circ f_{\sigma_{k}}(x_{1},x_{2})=(0.\sigma,x_{2})$ depends on the
choice of the points $(x_{1},x_{2})\in\mathbb{R}^{2},$ this IFS cannot be
point-fibred. Hence, by Theorem~\ref{maintheorem}, the IFS $\mathcal{F}$ is
also not hyperbolic. However, it is clearly hyperbolic when restricted to the
$x$-axis, the affine hull of unit interval $\pi(\Omega)=[0,1]\times\{0\}$.
Thus, this example illustrates Theorem~\ref{kameyamaTheorem}.

A key fact used in the proof of Theorem \ref{maintheorem} is that the set of
antipodal points in a convex body equals the set of diametric points. The
definitions of antipodal and diametric points are given in Definitions
\ref{antipodalPointsDef} and \ref{diametricPointsDef}, respectively. The
equality between these two point sets is proved in Theorem \ref{keyADTheorem}.
While it is possible that this result is present in the convex geometry
literature, it does not seem to be well-known. For example, it is not
mentioned in the works of Moszynska \cite{moszynska} or Schneider
\cite{schneider}. This equivalence between antipodal and diametric points is
crucial to our work because it provides the remetrization technique at the
heart of Theorem \ref{5Implies1Theorem}, which implies that a non-antipodal
IFS is hyperbolic. A consequence of Theorem~\ref{maintheorem} is that a
non-antipodal affine IFS has the seemingly stronger property of being
topologically contractive.

\section{Hyperbolic Implies Point-fibred Implies The Existence of an
Attractor}

The implications $(1) \Rightarrow(2)\Rightarrow(3)$ in
Theorem~\ref{maintheorem} are proved in this section. For this section we also
introduce the notation $f_{\sigma\,|\,k} = f_{\sigma_{1}}\circ f_{\sigma_{2}}
\circ\cdots\circ f_{\sigma_{k}}(x)$. Note that, for $k$ fixed, $f_{\sigma
\,|\,k}(x)$ is a function of both $x$ and $\sigma$.

\begin{theorem}
\label{1Implies2Theorem} If $\mathcal{F}=\left(  \mathbb{R}^{m}; f_{1}
,f_{2},...,f_{N}\right)  $ is a hyperbolic IFS, then $\mathcal{F}$ is point fibred.
\end{theorem}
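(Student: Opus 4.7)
The plan is to exploit the hyperbolic metric $d$ to show that, for each $\sigma\in\Omega$, the sequence $f_{\sigma\,|\,k}(x)$ is Cauchy with respect to $d$, hence (by Lipschitz equivalence) also Cauchy with respect to $d_E$; completeness of $\mathbb{R}^m$ then supplies the limit $\pi(\sigma)$.

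First I would pick a metric $d$ on $\mathbb{R}^m$, Lipschitz equivalent to $d_E$, and contraction constants $\alpha_n\in[0,1)$ with $d(f_n(u),f_n(v))\le\alpha_n d(u,v)$, and set $\alpha=\max_n\alpha_n<1$. Fix $x\in\mathbb{R}^m$ and $\sigma\in\Omega$. The telescoping estimate
\begin{equation*}
d\bigl(f_{\sigma\,|\,k}(x),\,f_{\sigma\,|\,k+1}(x)\bigr)=d\bigl(f_{\sigma\,|\,k}(x),\,f_{\sigma\,|\,k}(f_{\sigma_{k+1}}(x))\bigr)\le\alpha^{k}\,d(x,f_{\sigma_{k+1}}(x))\le\alpha^{k} M,
\end{equation*}
where $M:=\max_{n}d(x,f_n(x))<\infty$, shows that $\{f_{\sigma\,|\,k}(x)\}$ is Cauchy in $(\mathbb{R}^m,d)$ uniformly in $\sigma$. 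Lipschitz equivalence transfers this to a Cauchy sequence in $(\mathbb{R}^m,d_E)$, which is complete, so the limit $\pi(\sigma):=\lim_{k\to\infty}f_{\sigma\,|\,k}(x)$ exists. Independence of $x$ is immediate since $d(f_{\sigma\,|\,k}(x),f_{\sigma\,|\,k}(y))\le\alpha^{k}d(x,y)\to 0$.

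Next I would verify the two properties required to make $\pi$ a coding map. Continuity: the estimate above, summed as a geometric series, gives $d(f_{\sigma\,|\,k}(x),\pi(\sigma))\le\alpha^{k}M/(1-\alpha)$ uniformly in $\sigma$. Given $\sigma^{(j)}\to\sigma$ in the product topology, for any $k$ eventually $\sigma^{(j)}$ agrees with $\sigma$ on the first $k$ symbols, so $f_{\sigma^{(j)}\,|\,k}(x)=f_{\sigma\,|\,k}(x)$ and thus $\limsup_j d(\pi(\sigma^{(j)}),\pi(\sigma))\le 2\alpha^{k}M/(1-\alpha)$; letting $k\to\infty$ and invoking Lipschitz equivalence again shows $\pi(\sigma^{(j)})\to\pi(\sigma)$ in $d_E$. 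Commutativity with the shift: by continuity of $f_n$,
\begin{equation*}
\pi(s_n(\sigma))=\lim_{k\to\infty}f_n\circ f_{\sigma\,|\,k}(x)=f_n\Bigl(\lim_{k\to\infty}f_{\sigma\,|\,k}(x)\Bigr)=f_n(\pi(\sigma)),
\end{equation*}
which is the commutativity of the diagram in Definition \ref{codingMapDef}.

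There is no real obstacle here; the only mild subtlety is that in the Cauchy estimate the composed factor $d(x,f_{\sigma_{k+1}}(x))$ depends on $k$ through the symbol $\sigma_{k+1}$, which is why one needs the uniform bound $M=\max_n d(x,f_n(x))$ afforded by the finiteness of the alphabet $\{1,\dots,N\}$. Once that observation is in place, the remainder is a straightforward adaptation of the Banach fixed-point argument, together with the observation that Lipschitz equivalence preserves both Cauchy sequences and topological convergence so the point-fibred property proved in $(\mathbb{R}^m,d)$ transfers without loss to $(\mathbb{R}^m,d_E)$.
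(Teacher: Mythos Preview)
Your proposal is correct and follows essentially the same approach as the paper's own proof. The paper is very terse---it simply remarks that the existence and $x$-independence of the limit, as well as its uniformity in $\sigma$, are ``virtually identical to the proof of the classical Contraction Mapping Theorem,'' and then deduces continuity of $\pi$ as the uniform limit of the continuous maps $\sigma\mapsto f_{\sigma\,|\,k}(x)$; your write-up supplies exactly these details explicitly.
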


\begin{proof}
For $\sigma\in\Omega$, the proof that the limit $\lim_{k\rightarrow\infty
}f_{\sigma| k}$ exists and is independent of $x$ is virtually identical to the
proof of the classical Contraction Mapping Theorem. Moreover, the same proof
shows that the limit is uniform in $\sigma$.

With $\pi:\Omega\rightarrow\mathbb{R}^{m}$ defined by $\pi(\sigma) =
\lim_{k\rightarrow\infty}f_{\sigma| k}$ it is easy to check that, for each
$n=1,2,\dots, N$, the diagram \ref{commutediagram} commutes.

It only remains to show that $\pi$ is continuous. With $x$ fixed,
$f_{\sigma\,|\,k}(x)$ is a continuous function of $\sigma$. This is simply
because, if $\sigma, \tau\in\Omega$ are sufficiently close in the product
topology, then they agree on the first $k$ components. By
Definition~\ref{point-fibredDef}, the function $\pi$ is then the uniform limit
of continuous (in $\sigma$) functions defined on the compact set $\Omega$.
Therefore, $\pi$ is continuous.
\end{proof}

Let $\mathcal{F}$ be a point-fibred affine IFS, and let $A$ denote the set
\[
A := \pi(\Omega).
\]
According to Theorem~\ref{2Implies3Theorem}, $A$ is the attractor of
$\mathcal{F}$.

\begin{lemma}
\label{uniformLemma} Let $\mathcal{F}=\left(  \mathbb{R}^{m}; f_{1}
,f_{2},...,f_{N}\right)  $ be a point-fibred affine IFS with coding map
$\pi:\Omega\rightarrow\mathbb{R}^{m}$. If $B \subset\mathbb{R}^{m}$ is
compact, then the convergence in the limit
\[
\pi(\sigma) = \lim_{k\rightarrow\infty} f_{\sigma|k}(x)
\]
is uniform in $\sigma= \sigma_{1} \sigma_{2} \cdots\in\Omega$ and $x\in B$ simultaneously.
\end{lemma}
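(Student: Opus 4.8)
The plan is to reduce the simultaneous uniformity statement to the uniformity in $\sigma$ alone, which was already established in the proof of Theorem \ref{1Implies2Theorem} (where it was noted that the point-fibred convergence $\pi(\sigma)=\lim_k f_{\sigma|k}(x)$, with $x$ fixed, is uniform in $\sigma$). The key observation is that for an affine map, the "spread" of the image $f_{\sigma|k}(B)$ is controlled by the spread of $f_{\sigma|k}$ applied to a finite set, so one reference point $x_0$ plus control on how far $f_{\sigma|k}(x)$ can drift from $f_{\sigma|k}(x_0)$ suffices. First I would fix a reference point $x_0\in\mathbb{R}^m$ (say $x_0=0$) and write, for $x\in B$,
\[
d_E\bigl(f_{\sigma|k}(x),\pi(\sigma)\bigr)\le d_E\bigl(f_{\sigma|k}(x),f_{\sigma|k}(x_0)\bigr)+d_E\bigl(f_{\sigma|k}(x_0),\pi(\sigma)\bigr).
\]
The second term tends to $0$ uniformly in $\sigma$ by Theorem \ref{1Implies2Theorem}, so it remains to control the first term uniformly in $\sigma$ and in $x\in B$.

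For the first term, since each $f_n$ is affine, $f_{\sigma|k}$ is affine, so its linear part $L_{\sigma|k}$ satisfies $f_{\sigma|k}(x)-f_{\sigma|k}(x_0)=L_{\sigma|k}(x-x_0)$. Thus I need to show $\sup_{\sigma}\|L_{\sigma|k}\|_{\mathrm{op}}\to 0$ as $k\to\infty$; combined with boundedness of $B$ (so $\|x-x_0\|_2$ is bounded on $B$) this finishes the estimate. To get $\sup_\sigma\|L_{\sigma|k}\|_{\mathrm{op}}\to0$, I would apply the point-fibred hypothesis (or rather its uniform form) to the finitely many points $x_0, x_0+e_1,\dots,x_0+e_m$ simultaneously: for each $i$, $L_{\sigma|k}(e_i)=f_{\sigma|k}(x_0+e_i)-f_{\sigma|k}(x_0)\to\pi(\sigma)-\pi(\sigma)=0$ uniformly in $\sigma$. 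Since a linear operator's norm is bounded by a constant (depending only on $m$) times $\max_i\|L_{\sigma|k}e_i\|_2$, uniform convergence to $0$ on the basis vectors gives $\sup_\sigma\|L_{\sigma|k}\|_{\mathrm{op}}\to0$.

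The main obstacle, such as it is, is making sure that the uniform-in-$\sigma$ convergence really does hold for each of the finitely many chosen points and can be combined: one must take the maximum over the $m+1$ points of the (already uniform in $\sigma$) convergence rates, which is harmless since there are finitely many. One should also double-check that the constant relating $\|L\|_{\mathrm{op}}$ to $\max_i\|Le_i\|_2$ is independent of everything (it is — e.g.\ $\|L\|_{\mathrm{op}}\le\sqrt{m}\,\max_i\|Le_i\|_2$). Putting the pieces together: given $\varepsilon>0$, choose $K$ so that for $k\ge K$ and all $\sigma$ we have $d_E(f_{\sigma|k}(x_0),\pi(\sigma))<\varepsilon/2$ and $\sqrt{m}\,\max_i\|L_{\sigma|k}e_i\|_2\cdot\sup_{x\in B}\|x-x_0\|_2<\varepsilon/2$; then $d_E(f_{\sigma|k}(x),\pi(\sigma))<\varepsilon$ for all $\sigma\in\Omega$ and all $x\in B$, which is the claimed simultaneous uniformity.
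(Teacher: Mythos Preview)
Your overall strategy coincides with the paper's: split $d_E(f_{\sigma|k}(x),\pi(\sigma))$ via a reference point, control the first term through the linear part $L_{\sigma|k}$ evaluated on the standard basis (using $L_{\sigma|k}e_j=f_{\sigma|k}(x_0+e_j)-f_{\sigma|k}(x_0)$), and handle the reference-point term separately. The paper carries this out with $x_0=0$ and the bound $d_E(f_{\sigma|k}(x),f_{\sigma|k}(y))\le c\,\max_j\|f_{\sigma|k}(e_j)-f_{\sigma|k}(0)\|_2$, which is exactly your operator-norm estimate in slightly different dress.

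There is, however, one real issue with your justification. You obtain the uniform-in-$\sigma$ convergence at the fixed anchor points $x_0,x_0+e_1,\dots,x_0+e_m$ by citing the proof of Theorem~\ref{1Implies2Theorem}. But that theorem assumes $\mathcal{F}$ is \emph{hyperbolic}, and the uniformity there is extracted from the common contraction constant. The present lemma assumes only that $\mathcal{F}$ is point-fibred; at this stage of the logical flow (the arc $(2)\Rightarrow(3)$ in Theorem~\ref{maintheorem}) hyperbolicity is not yet available, so you cannot import that argument without circularity. The paper instead appeals directly to Definition~\ref{point-fibredDef} at each of the finitely many anchor points $0,e_1,\dots,e_m$ to produce indices $k_j$ independent of $\sigma$, and then takes $\overline{k}=\max_j k_j$. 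You should replace your appeal to Theorem~\ref{1Implies2Theorem} with that direct appeal to the point-fibred definition; once you do, your argument and the paper's are essentially identical.
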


\begin{proof}
Only the uniformity requiress proof. Express $f_{n}(x) = L_{n}x + a_{n}$,
where $L_{n}$ is the linear part. Then
\begin{equation}
\label{linear}\begin{aligned} f_{\sigma|k}(x) &= L_{\sigma|k}(x) + L_{\sigma|k-1}(a_{\sigma_k})+ L_{\sigma|k-2}(a_{\sigma_{k-1}}) + \cdots + L_{\sigma|1}a_2 + a_1 \\ &= L_{\sigma|k}(x) + f_{\sigma|k}(0). \end{aligned}
\end{equation}
From equation~\ref{linear} it follows that, for any $x,y\in B$,
\begin{equation}
\label{close1}\begin{aligned} d_E(f_{\sigma|k}(x),f_{\sigma|k}(y)) &= \left\Vert L_{\sigma|k}(x-y) \right \Vert_2 \\ &\leq \sup\, \Bigl\{ \sum_{j=1}^m 2\,|c_j| \left\Vert L_{\sigma|k}(e_j) \right\Vert_2 \, : \, c_1 e_1 + \cdots + c_m e_m \in B \Bigr\} \\ &\leq c\, \max_j \, \left\Vert f_{\sigma|k}(e_j) -f_{\sigma|k}(0) \right\Vert_2, \end{aligned}
\end{equation}
where $c = 2 m \cdot\sup\, \{ \max_{j} |c_{j}|\, : c_{1} e_{1} + \cdots+ c_{m}
e_{m} \in B\}$ and where $\{e_{j}\}_{j=1}^{m}$ is a basis for $\mathbb{R}^{m}$.

Let $\epsilon>0$. From the definition of point-fibred there is a $k_{j}$,
independent of $\sigma$, such that if $k>k_{j}$, then
\[
\left\Vert f_{\sigma|k}(e_{j}) - \pi(\sigma) \right\Vert _{2} < \frac
{\epsilon}{4c} \qquad\text{and} \qquad\left\Vert f_{\sigma|k}(0) - \pi(\sigma)
\right\Vert _{2} < \frac{\epsilon}{4c},
\]
which implies $\left\Vert f_{\sigma|k}(e_{j}) - f_{\sigma|k}(0) \right\Vert
_{2} < \frac{\epsilon}{2c}$. This and equation~\ref{close1} implies that if
$k\geq{{\overline k}} := \max_{j} k_{j}$, then for any $x,y\in B$ we have
\begin{equation}
\label{close2}d_{E}(f_{\sigma|k}(x),f_{\sigma|k}(y)) < c\frac{\epsilon}{2c} =
\frac{\epsilon}{2}.
\end{equation}
Let $b$ be a fixed element of $B$. There is a $k_{b}$, independent of $\sigma
$, such that if $k>k_{b}$, then $d_{E}(f_{\sigma|k}(b),\pi(\sigma)) <
\frac{\epsilon}{2}$. If $k>max(k_{b},{\overline k})$ then, by
equation~\ref{close2}, for any $x\in B$
\[
d_{E}(f_{\sigma|k}(x),\pi(\sigma)) \leq d_{E}(f_{\sigma|k}(x),f_{\sigma|k}(b))
+ d_{E}(f_{\sigma|k}(b),\pi(\sigma)) < \frac{\epsilon}{2}+\frac{\epsilon}%
{2}=\epsilon.
\]

\end{proof}

\begin{theorem}
[A Point-Fibred IFS has an Attractor]\label{2Implies3Theorem} If $\mathcal{F}=
\left(  \mathbb{R}^{m}; f_{1} ,f_{2},...,f_{N}\right)  $ is a point-fibred
affine IFS, then $\mathcal{F}$ has an attractor $A = \pi(\Omega)$, where $\pi:
\Omega\rightarrow\mathbb{R}^{m}$ is the coding map of $\mathcal{F}$.
\end{theorem}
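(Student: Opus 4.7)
The plan is to verify both attractor conditions in Definition \ref{attractorIFSDef} for $A := \pi(\Omega)$. First observe that $A$ is compact (hence in $\mathbb{H}$) because $\pi$ is continuous and $\Omega$ is compact.

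For the self-referential equation $A = \mathcal{F}(A)$, I would use the commutative diagram \ref{commutediagram} and the fact that $\Omega = \bigcup_{n=1}^{N} s_n(\Omega)$ (every infinite sequence begins with some symbol $n \in \{1,\dots,N\}$). Then
\[
\mathcal{F}(A) = \bigcup_{n=1}^N f_n(\pi(\Omega)) = \bigcup_{n=1}^N \pi(s_n(\Omega)) = \pi\Bigl(\bigcup_{n=1}^N s_n(\Omega)\Bigr) = \pi(\Omega) = A.
\]

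For the convergence $\mathcal{F}^{\circ k}(B) \to A$ in the Hausdorff metric, fix a nonempty compact set $B \in \mathbb{H}$ and $\epsilon > 0$. Since every element of $\mathcal{F}^{\circ k}(B)$ has the form $f_{\sigma|k}(x)$ for some $\sigma \in \Omega$ and $x \in B$ (the value $f_{\sigma|k}(x)$ depends only on $\sigma_1,\dots,\sigma_k$, so the finite words of length $k$ indexing $\mathcal{F}^{\circ k}$ may be extended to elements of $\Omega$), Lemma \ref{uniformLemma} supplies a $K$ so that for all $k \geq K$, all $\sigma \in \Omega$, and all $x \in B$,
\[
d_E\bigl(f_{\sigma|k}(x), \pi(\sigma)\bigr) < \epsilon.
\]
This gives both required inclusions: any point $f_{\sigma|k}(x) \in \mathcal{F}^{\circ k}(B)$ lies within $\epsilon$ of the point $\pi(\sigma) \in A$, so $\mathcal{F}^{\circ k}(B)$ is contained in the $\epsilon$-neighborhood of $A$; conversely, fixing any $b \in B$, every $\pi(\sigma) \in A$ lies within $\epsilon$ of the point $f_{\sigma|k}(b) \in \mathcal{F}^{\circ k}(B)$, so $A$ is contained in the $\epsilon$-neighborhood of $\mathcal{F}^{\circ k}(B)$. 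Therefore $d_\mathbb{H}(\mathcal{F}^{\circ k}(B), A) < \epsilon$ for all $k \geq K$.

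The main obstacle is already handled by the simultaneous uniformity in $\sigma$ and $x$ established in Lemma \ref{uniformLemma}; without that, the Hausdorff convergence would be delicate because $\mathcal{F}^{\circ k}(B)$ is a union of $N^k$ images and one needs both set-theoretic inclusions to hold uniformly. The mild bookkeeping point is merely to identify a finite word of length $k$ with any infinite extension in $\Omega$, which is legitimate because $f_{\sigma|k}$ depends only on the first $k$ coordinates of $\sigma$.
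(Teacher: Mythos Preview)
Your proof is correct and follows essentially the same approach as the paper: both use the commutative diagram to verify the self-referential equation and invoke Lemma~\ref{uniformLemma} for the Hausdorff convergence. Your presentation is in fact slightly tidier---you extract a single uniform bound $K$ from the lemma rather than the paper's separate $M_1$ and $M_2$ (which are in any case produced by the same uniformity), and you make explicit both the compactness of $A$ and the identification of finite words with their infinite extensions.
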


\begin{proof}
It follows directly from the commutative diagram (\ref{commutediagram}) that
$A$ obeys the self-referential equation (\ref{selfrefeq}). We next show that
$A$ satisfies equation (\ref{convergeq}).

Let $\epsilon>0$. We must show that there is an $M$ such that if $k>M$, then
$d_{\mathbb{H}} (\mathcal{F}^{\circ k}(B), \pi(\Omega)) < \epsilon$. It is
sufficient to let $M = \max(M_{1},M_{2})$, where $M_{1}$ and $M_{2}$ are
defined as follows.

First, let $a$ be an arbitrary element of $A$. Then there exists a $\sigma
\in\Omega$ such that $a=\pi(\sigma)$. By Lemma~\ref{uniformLemma} there is an
$M_{1}$ such that if $k>M_{1}$, then $d_{E}(f_{\sigma|k}(b), a) =
d_{E}(f_{\sigma|k}(b), \pi(\sigma)) < \epsilon$, for all $b\in B$. In other
words, $A$ lies in an $\epsilon$-neighborhood of $\mathcal{F}^{\circ k}(B)$.

Second, let $b$ be an arbitrary element of $B$ and $\sigma$ an arbitrary
element of $\Omega$. If $a := \pi(\sigma) \in A$, then there is an $M_{2}$
such that if $k>M_{2}$, then $d_{E}(f_{\sigma|k}(b), a) = d_{E}(f_{\sigma
|k}(b), \pi(\sigma)) < \epsilon$. In other words, $\mathcal{F}^{\circ k}(B)$
lies in an $\epsilon$-neighborhood of $A$.
\end{proof}

\section{An IFS with an Attractor is Topologically Contractive}

The goal of this section is to establish the implication $(3) \Rightarrow(4)$
in Theorem~\ref{maintheorem}. We will show that if an affine IFS has an
attractor as defined in Defintion~\ref{attractorIFSDef} , then it is a
topological contraction. The proof uses notions involving convex bodies.

\begin{definition}
\label{centrallySymmetricDef} A convex body $K$ is \textit{centrally
symmetric} if it has the property that whenever $x \in K,$ then $-x \in K$.
\end{definition}

A well-known general technique for creating centrally symmetric convex bodies
from a given convex body is provided by the next proposition.

\begin{proposition}
\label{centrallySymmetricProp} If a set $K$ is a convex body in $\mathbb{R}%
^{m},$ then the set $K^{\prime}= K - K$ is a centrally symmetric convex body
in $\mathbb{R}^{m}$.
\end{proposition}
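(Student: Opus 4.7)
The plan is to verify the four defining properties of a centrally symmetric convex body in $\mathbb{R}^m$ one at a time, namely that $K' = K - K$ is (i) convex, (ii) compact, (iii) has non-empty interior, and (iv) is centrally symmetric. Each of these follows from a one-line argument using the hypothesis that $K$ itself is a convex body, so no single step should be a serious obstacle; the main thing is just to marshal the arguments cleanly.

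First I would establish convexity by a direct calculation: given $u_i = x_i - y_i \in K'$ for $i = 1, 2$ and $\lambda \in [0,1]$, the combination
\[
\lambda u_1 + (1-\lambda) u_2 = \bigl(\lambda x_1 + (1-\lambda) x_2\bigr) - \bigl(\lambda y_1 + (1-\lambda) y_2\bigr)
\]
lies in $K - K$ because each parenthesized combination lies in $K$ by convexity of $K$. Next, compactness of $K'$ follows because $K'$ is the image of the compact set $K \times K \subset \mathbb{R}^m \times \mathbb{R}^m$ under the continuous subtraction map $(x,y) \mapsto x - y$.

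For the interior, I would use the fact that $K$ has non-empty interior to pick a point $x_0$ and a radius $r > 0$ such that the open ball $B(x_0, r)$ is contained in $K$. Since $x_0 \in K$, the translate
\[
B(0, r) = B(x_0, r) - x_0 \subset K - K = K'
\]
shows that $0$ is an interior point of $K'$. Finally, central symmetry is immediate: if $z = x - y \in K'$ with $x, y \in K$, then $-z = y - x$ is also of the form (element of $K$) minus (element of $K$), so $-z \in K'$.

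The only place where one has to be mildly careful is the interior step, to avoid confusing the interior of $K$ (which need not contain the origin) with the interior of $K - K$ (which necessarily does, by the translation argument above). Otherwise, every step is a direct consequence of the definitions and the hypothesis that $K$ is a convex body.
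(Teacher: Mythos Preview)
Your proof is correct and complete. The paper itself does not supply a proof of this proposition, treating it as a well-known fact from convex geometry, so there is nothing to compare against; your verification of convexity, compactness, non-empty interior, and central symmetry is exactly the standard argument one would give.
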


\begin{definition}
[Minkowski Norm]\label{minkowskiNormDef} If $K$ is a centrally symmetric
convex body in $\mathbb{R}^{m},$ then the \textit{Minkowski norm} on
$\mathbb{R}^{m}$ is defined by
\[
\left\Vert x\right\Vert _{K} = \inf\, \{ \lambda\geq0 \, : \, x \in\lambda\,
K\} .
\]

\end{definition}

The next proposition is also well-known.

\begin{proposition}
\label{minkowskiNormProp} If $K$ is a centrally symmetric convex body in
$\mathbb{R}^{m},$ then the function $\left\Vert x \right\Vert _{K}$ defines a
norm on $\mathbb{R}^{m}.$ Moreover, the set $K$ is the unit ball with respect
to the Minkowski norm $\left\Vert x \right\Vert _{K}$.
\end{proposition}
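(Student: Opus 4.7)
The plan is to verify the three norm axioms for $\|\cdot\|_K$ (positive definiteness, absolute homogeneity, triangle inequality) and then identify $K$ with the unit ball. This is the classical Minkowski functional argument. There is no single hard step; the main thing to watch is the interplay of the four hypotheses on $K$ (compact, convex, nonempty interior, centrally symmetric), each of which enters the proof at a distinct point.

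The first (and arguably most subtle) step is to show that $0$ lies in the \emph{interior} of $K$, since this is what will make $\|\cdot\|_K$ finite. Fix $y \in K$ and $r > 0$ with $B(y,r) \subset K$, which exists because $K$ has nonempty interior; central symmetry gives $B(-y,r) \subset K$; and convexity then yields $B(0,r) \subset K$ by taking midpoints of corresponding points in the two balls. Combined with the existence of some $R$ with $K \subset B(0,R)$ (from compactness), this gives
\[
\tfrac{1}{R}\,\|x\|_2 \;\leq\; \|x\|_K \;\leq\; \tfrac{1}{r}\,\|x\|_2,
\]
so $\|\cdot\|_K$ is finite and positive definite. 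For absolute homogeneity, the case $\alpha > 0$ follows by a direct change of variable in the infimum, while the case $\alpha < 0$ is exactly where central symmetry is used a second time: $K = -K$ gives $\lambda K = -\lambda K$, hence $\|\alpha x\|_K = \|-\alpha x\|_K = |\alpha|\,\|x\|_K$.

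For the triangle inequality, given $\lambda > \|x\|_K$ and $\mu > \|y\|_K$ we have $x/\lambda,\, y/\mu \in K$, so convexity yields
\[
\frac{x+y}{\lambda+\mu} \;=\; \frac{\lambda}{\lambda+\mu}\cdot\frac{x}{\lambda} \;+\; \frac{\mu}{\lambda+\mu}\cdot\frac{y}{\mu} \;\in\; K,
\]
whence $\|x+y\|_K \leq \lambda + \mu$; taking infima over $\lambda$ and $\mu$ finishes the argument. Finally, for the unit-ball identification: if $x \in K$ then $x \in 1 \cdot K$, so $\|x\|_K \leq 1$. Conversely, if $\|x\|_K \leq 1$, then for every integer $n$ with $1 + 1/n > \|x\|_K$ we have $x/(1+1/n) \in K$; letting $n \to \infty$ and using that $K$ is closed (from compactness), we conclude $x \in K$. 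Thus $K = \{x : \|x\|_K \leq 1\}$, completing the proof.
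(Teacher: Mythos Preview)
Your proof is correct and is the standard Minkowski-functional argument. The paper itself does not prove this proposition at all; it simply declares it well-known and moves on. So there is nothing to compare against beyond noting that you have supplied exactly the classical verification the paper omits, and that each hypothesis on $K$ is used where you say it is.
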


\begin{definition}
[Minkowski Metric]\label{minkowskiMetricDef} If $K$ is a centrally symmetric
convex body in $\mathbb{R}^{m}$ and $\left\Vert x\right\Vert _{K}$ is the
associated Minkowski norm, then define the \textit{Minkowski metric} on
$\mathbb{R}^{m}$ by the rule
\[
d_{K}(x,y) := \left\Vert x-y\right\Vert _{K}.
\]

\end{definition}

While R. Rockafeller \cite{rockafeller} refers to such a metric as a
\textit{Minkowski metric}, the reader should be aware that this term is also
associated with the metric on space-time in theory of relativity. Since, for
any convex body $K$ there are positive numbers $r$ and $R$ such that $K$
contains a ball of radius $r$ and is contained in a ball of radius $R$, the
following proposition is clear.

\begin{proposition}
\label{lipschitzProp} If $d$ is a Minkowski metric, then $d$ is Lipschitz
equivalent to the standard metric $d_{E}$ on $\mathbb{R}^{m}$.
\end{proposition}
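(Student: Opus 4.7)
The plan is to convert the Euclidean ball sandwich for $K$ into a two-sided estimate between $\|\cdot\|_K$ and $\|\cdot\|_2$, and then pass from norms to metrics by translation invariance. Concretely, I would first verify that the enclosing balls can be taken concentric with the origin: central symmetry together with convexity forces $0 \in K$ (since any $x \in K$ has $-x \in K$ and $0$ is their midpoint), and if some open ball $B(p, \rho)$ lies in $K$, then symmetry gives $B(-p, \rho) \subset K$, so by convexity $B(0, \rho) \subset K$ as well. Combining this observation with the compactness of $K$, I obtain constants $0 < r \leq R$ with $B(0, r) \subset K \subset B(0, R)$, where $B(0, \cdot)$ denotes a Euclidean ball.

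Next, I would exploit each inclusion separately. From $B(0, r) \subset K$, any $z$ with $\|z\|_2 \leq r$ lies in $K$, so $\|z\|_K \leq 1$; positive homogeneity of the Minkowski norm (Proposition~\ref{minkowskiNormProp}) then upgrades this to $\|z\|_K \leq (1/r)\|z\|_2$ for every $z \in \mathbb{R}^m$. For the reverse bound, closedness of $K$ ensures that the infimum defining $\|z\|_K$ is attained: if $\lambda_n \searrow \|z\|_K$ with $z/\lambda_n \in K$, the limit $z/\|z\|_K$ again lies in $K \subset B(0, R)$, so $\|z\|_2 \leq R\|z\|_K$. Putting the two bounds together yields $(1/R)\|z\|_2 \leq \|z\|_K \leq (1/r)\|z\|_2$, and substituting $z = x - y$ gives the Lipschitz equivalence $(1/R)\, d_E(x,y) \leq d_K(x,y) \leq (1/r)\, d_E(x,y)$ for all $x,y \in \mathbb{R}^m$.

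There is no real obstacle here: the central symmetry hypothesis is precisely what makes the balls concentric with the origin, and closedness of $K$ (which follows from compactness) is what makes the Minkowski infimum attained. The argument is essentially the standard comparison of finite-dimensional norms, specialized to the norm induced by the unit ball $K$.
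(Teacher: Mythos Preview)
Your proposal is correct and follows exactly the approach the paper indicates: the paper's entire justification is the single remark preceding the proposition that any convex body contains a ball of radius $r$ and is contained in a ball of radius $R$, from which it declares the result ``clear.'' You have simply supplied the routine details the paper omits---reducing to balls centered at the origin via central symmetry, and converting the ball inclusions into the two-sided norm estimate---so there is nothing to compare.
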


\begin{proposition}
\label{transMinkowskiProp} A metric $d \, : \, \mathbb{R}^{m} \times
\mathbb{R}^{m} \rightarrow[0,\infty)$ is a Minkowski metric if and only if it
is translation invariant and distances behave linearly along line segments.
More specifically,
\begin{equation}
d(x+z,y+z)=d(x,y)\qquad\hbox{and} \qquad d(x,(1-\lambda)x+\lambda y)=\lambda
d(x,y) \label{minkowski}%
\end{equation}
for all $x, y, z \in\mathbb{R}^{m}$ and all $\lambda\in\mathbb{[}0,1]$.
\end{proposition}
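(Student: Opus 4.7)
For the forward direction, both conditions in (\ref{minkowski}) follow from direct calculation using $d_K(x,y) = \Vert x-y\Vert_K$ and the norm axioms of $\Vert\cdot\Vert_K$: translation invariance is immediate since $d_K$ depends only on the difference $x-y$, and linearity on segments reduces to $\Vert\lambda(y-x)\Vert_K = \lambda\Vert y-x\Vert_K$ for $\lambda \in [0,1]$, which holds because $\lambda \geq 0$.

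For the reverse direction, my plan is to construct a centrally symmetric convex body $K$ whose Minkowski metric reproduces $d$. The natural candidate is $K := \{x \in \mathbb{R}^{m} : d(0,x) \leq 1\}$, so the main task is to show that $\Vert x\Vert := d(0,x)$ is a norm on $\mathbb{R}^{m}$. Positive definiteness is immediate from the metric axioms. The triangle inequality $\Vert x+y\Vert \leq \Vert x\Vert + \Vert y\Vert$ follows from the triangle inequality for $d$ applied to the triple $0, x, x+y$, combined with translation invariance to rewrite $d(x, x+y) = d(0, y)$.

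The main obstacle is full absolute homogeneity $\Vert\lambda x\Vert = |\lambda|\,\Vert x\Vert$ for all $\lambda \in \mathbb{R}$, because hypothesis (\ref{minkowski}) only directly yields this for $\lambda \in [0,1]$ (by setting the basepoint to $0$ in the second identity). For $\lambda > 1$, I rescale: write $\Vert x\Vert = \Vert\lambda^{-1}(\lambda x)\Vert = \lambda^{-1}\,\Vert\lambda x\Vert$ since $\lambda^{-1} \in [0,1]$. For $\lambda < 0$, what is needed is the symmetry $\Vert -x\Vert = \Vert x\Vert$, which follows by translating both arguments in $d(0,-x)$ by $x$ to obtain $d(x,0) = d(0,x)$.

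Once $\Vert\cdot\Vert$ is a norm on the finite-dimensional space $\mathbb{R}^{m}$, it is automatically equivalent to the Euclidean norm by a standard argument (bound $\Vert\cdot\Vert$ above by expanding in the standard basis and invoking the triangle inequality, then use compactness of the Euclidean unit sphere together with the resulting continuity of $\Vert\cdot\Vert$ for the reverse bound). Hence the unit ball $K$ is a compact convex set with nonempty interior, i.e., a convex body; central symmetry and convexity of $K$ are immediate from the norm axioms. Proposition~\ref{minkowskiNormProp} then yields $\Vert\cdot\Vert_K = \Vert\cdot\Vert$, and translation invariance of $d$ gives $d(x,y) = d(0, x-y) = \Vert x-y\Vert = \Vert x-y\Vert_K = d_K(x,y)$, completing the proof.
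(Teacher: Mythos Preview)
Your proof is correct. The paper itself does not supply an argument for this proposition; it simply refers the reader to Rockafellar's \emph{Convex Analysis}, pp.~131--132. Your write-up is therefore more self-contained than the paper's treatment, and it follows the standard route one finds in that reference: verify the two identities directly from the norm axioms in the forward direction, and in the reverse direction show that $\Vert x\Vert := d(0,x)$ is a genuine norm (positive definiteness from the metric axioms, subadditivity from translation invariance plus the metric triangle inequality, homogeneity bootstrapped from $[0,1]$ to all of $\mathbb{R}$ via rescaling and the symmetry $d(0,-x)=d(x,0)$), then recover $K$ as its unit ball and invoke equivalence of norms on $\mathbb{R}^m$ to confirm $K$ is a convex body. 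Nothing is missing.
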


\begin{proof}
For a proof see Rockafeller \cite{rockafeller} pp.131-132.
\end{proof}

\begin{definition}
[Topologically Contractive IFS]\label{topologicalContractiveIFSDef} An IFS
$\mathcal{F}=\{\mathbb{R}^{m};f_{1},f_{2},...,f_{N}\}$ is called
\textit{topologically contractive} if there is a convex body $K$ such that
$\mathcal{F}\left(  K\right)  \subset int(K)$.
\end{definition}

The proof of Theorem \ref{3Implies4Theorem} relies on the following lemma
which is easily proved.

\begin{lemma}
\label{lin-conv} If $g \,:\, \mathbb{R}^{m}\rightarrow\mathbb{R}^{m}$ is
affine and $S\subset\mathbb{R}^{m},$ then $g(conv(S))=conv(g(S))$.
\end{lemma}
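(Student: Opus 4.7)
The plan is to prove the two set inclusions separately, using the single fact that an affine map preserves convex combinations. Recall that if $g(x) = Lx + a$ with $L$ linear, then for any points $s_1,\dots,s_n$ and scalars $\lambda_i \geq 0$ with $\sum \lambda_i = 1$, one has $g\bigl(\sum \lambda_i s_i\bigr) = \sum \lambda_i g(s_i)$. This is the only tool needed.

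For the forward inclusion $g(\mathrm{conv}(S)) \subset \mathrm{conv}(g(S))$, I would take a point $y \in g(\mathrm{conv}(S))$, write $y = g(x)$ where $x = \sum_{i=1}^n \lambda_i s_i$ is a finite convex combination of elements $s_i \in S$, and apply the affine-combination identity to conclude $y = \sum \lambda_i g(s_i) \in \mathrm{conv}(g(S))$.

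For the reverse inclusion $\mathrm{conv}(g(S)) \subset g(\mathrm{conv}(S))$, I would start with $y \in \mathrm{conv}(g(S))$, write $y = \sum_{i=1}^n \lambda_i g(s_i)$ for some $s_i \in S$ and convex weights $\lambda_i$, then define the preimage candidate $x := \sum_{i=1}^n \lambda_i s_i$, which lies in $\mathrm{conv}(S)$ by construction. Applying the affine-combination identity once more gives $g(x) = y$, so $y \in g(\mathrm{conv}(S))$.

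There is no real obstacle here: the lemma is essentially the statement that affine maps commute with the formation of finite convex combinations, and the only verification required is that $g$ satisfies $g(\sum \lambda_i s_i) = \sum \lambda_i g(s_i)$ when $\sum \lambda_i = 1$, which follows immediately from writing $g = L + a$ and using linearity of $L$ together with $\sum \lambda_i a = a$.
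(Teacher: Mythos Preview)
Your proof is correct and is exactly the standard argument one expects here. The paper itself does not spell out a proof of this lemma, merely remarking that it ``is easily proved,'' so there is nothing further to compare against; your two-inclusion argument via preservation of convex combinations is precisely what that phrase is pointing to.
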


\begin{theorem}
[The Existence of an Attractor Implies a Topological Contraction]%
\label{3Implies4Theorem} For an affine IFS $\mathcal{F}=\{\mathbb{R}^{m}%
;f_{1},f_{2},...,f_{N}\}$, if there exists an attractor $A \in\mathbb{H}$ of
the affine IFS $\mathcal{F}=\{\mathbb{R}^{m};f_{1},f_{2},...,f_{N}\}$, then
$\mathcal{F}$ is topologically contractive.
\end{theorem}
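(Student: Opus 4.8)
The plan is to start from the attractor $A$ and manufacture a convex body $K$ with $\mathcal{F}(K)\subset \operatorname{int}(K)$. The natural first candidate is $K_0 := \operatorname{conv}(A)$, which is compact and convex by Lemma~\ref{lin-conv}-type reasoning (compactness of $A$ plus Carath\'eodory). Using $A=\mathcal{F}(A)$ and Lemma~\ref{lin-conv} one gets $\mathcal{F}(K_0)=\bigcup_n f_n(\operatorname{conv}(A))=\bigcup_n \operatorname{conv}(f_n(A))\subset \operatorname{conv}\bigl(\bigcup_n f_n(A)\bigr)=\operatorname{conv}(A)=K_0$, so $K_0$ is at least forward-invariant. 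Two defects remain: first, $K_0$ may have empty interior (if $A$ lies in a proper affine subspace), so it need not be a convex body; second, even when $\operatorname{int}(K_0)\neq\emptyset$, we only have $\mathcal{F}(K_0)\subset K_0$, not containment in the interior.

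I would handle the interior defect by fattening: replace $K_0$ by $K_\varepsilon := K_0 + \varepsilon B$, where $B$ is the closed Euclidean unit ball (or, to stay inside a fixed affine subspace, the unit ball of that subspace). This is a convex body. The key estimate is that because $A$ is an attractor, $\mathcal{F}^{\circ k}(K_\varepsilon)\to A$ in the Hausdorff metric, so for $k$ large enough $\mathcal{F}^{\circ k}(K_\varepsilon)$ is within Hausdorff distance $\varepsilon/2$ of $A$, hence contained in $K_0 + (\varepsilon/2)B \subset \operatorname{int}(K_\varepsilon)$. Thus the body $K := \mathcal{F}^{\circ (k-1)}(K_\varepsilon)$ — or rather its convex hull, to keep it convex — satisfies $\mathcal{F}(\operatorname{conv}\mathcal{F}^{\circ(k-1)}(K_\varepsilon)) \subset \operatorname{int}(K_\varepsilon)$; one then intersects or re-convexifies appropriately. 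A cleaner packaging: let $K := \operatorname{conv}\bigl(\mathcal{F}^{\circ k}(K_\varepsilon)\bigr)$ for suitable $k$; since $\mathcal{F}^{\circ(k+1)}(K_\varepsilon)=\mathcal{F}(\mathcal{F}^{\circ k}(K_\varepsilon))$ sits deep inside $A$'s neighborhood while $K$ contains a uniform neighborhood of $A$, convexity (via Lemma~\ref{lin-conv}) propagates through and $\mathcal{F}(K)\subset\operatorname{int}(K)$.

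The remaining issue is the genuinely degenerate case where $A$ — and hence every iterate $\mathcal{F}^{\circ k}(B)$ — lies in a proper affine subspace $V\subsetneq\mathbb{R}^m$, so no forward-invariant convex body can have nonempty interior \emph{in} $\mathbb{R}^m$. I expect this to be the main obstacle, and I would resolve it by observing that the affine IFS must then restrict to $V$: since $A=\mathcal{F}(A)\subset V$ forces each $f_n(V)\subset V$ (the affine hull of $A$ is mapped into itself because $f_n(\operatorname{aff} A)=\operatorname{aff} f_n(A)\subset\operatorname{aff} A$, and one arranges $\operatorname{aff} A = V$ by taking $V$ minimal). Working inside $V\cong\mathbb{R}^{m'}$, the construction above yields a convex body $K'\subset V$ with $\mathcal{F}(K')\subset\operatorname{int}_V(K')$. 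Finally thicken back to $\mathbb{R}^m$: set $K := K' + \delta B_m$ for small $\delta$; this is a convex body in $\mathbb{R}^m$, and because the $f_n$ contract (topologically, in the $V$-direction) while moving the $\delta$-collar by a bounded linear amount, choosing $\delta$ small and, if necessary, first passing to a further iterate makes $\mathcal{F}(K)\subset\operatorname{int}(K)$ — here one uses that $\|L_n\|$ restricted to the normal directions is finite and that the $V$-part strictly contracts into $\operatorname{int}_V(K')$, so a product-type neighborhood argument closes the estimate. Assembling these three steps — invariance of $\operatorname{conv}(A)$, fattening plus Hausdorff convergence to get strict interior containment, and descent to $\operatorname{aff}(A)$ then re-thickening to handle empty interior — gives the theorem.
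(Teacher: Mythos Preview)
Your first move---fattening $\operatorname{conv}(A)$ to $K_\varepsilon$ and using Hausdorff convergence of $\mathcal{F}^{\circ k}(K_\varepsilon)$ to $A$ to find a power $t$ with $\mathcal{F}^{\circ t}(K_\varepsilon)\subset\operatorname{int}(K_\varepsilon)$---is correct and is exactly the paper's Step~(1). (Note that because you fatten by a full-dimensional ball, $K_\varepsilon$ is already a convex body in $\mathbb{R}^m$; your later ``degenerate case'' discussion is unnecessary.)

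The genuine gap is the passage from ``$\mathcal{F}^{\circ t}$ topologically contracts some body'' to ``$\mathcal{F}$ topologically contracts some body''. Your candidate $K=\operatorname{conv}\bigl(\mathcal{F}^{\circ k}(K_\varepsilon)\bigr)$ does \emph{not} satisfy $\mathcal{F}(K)\subset\operatorname{int}(K)$ in general: a single map $f_n$ can expand in some direction even when every $t$-fold composition contracts. Concretely, take $N=1$ and the paper's Example~3.1 map $f=\begin{pmatrix}0&2\\ 1/8&0\end{pmatrix}$. The attractor is $\{0\}$; with $K_\varepsilon$ the $\varepsilon$-disk one has $f^{2}(K_\varepsilon)=\tfrac14 K_\varepsilon\subset\operatorname{int}(K_\varepsilon)$, so $t=2$ works. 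But for any $k$, $K=\operatorname{conv}(f^{k}(K_\varepsilon))$ is an ellipse and $f(K)$ is that ellipse stretched by factor $2$ in one axis, hence not contained in $\operatorname{int}(K)$. The hand-wave ``$\mathcal{F}^{\circ(k+1)}(K_\varepsilon)$ sits deep inside while $K$ contains a uniform neighborhood of $A$'' fails precisely here.

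What the paper does to bridge this is a different construction: pass to the \emph{linear parts} $L_n$ and build the centrally symmetric body
\[
K_2:=\sum_{k=0}^{t-1}\Bigl(\operatorname{conv}\mathcal{F}^{\circ k}(K_1)-\operatorname{conv}\mathcal{F}^{\circ k}(K_1)\Bigr),
\]
a Minkowski sum of difference bodies. Under any single $L_n$ the $k$th summand lands inside the $(k{+}1)$th, and the $(t{-}1)$th lands (via Step~(1)) inside $\operatorname{int}(K_1-K_1)$, the $0$th summand; because it is a sum rather than a union, one strict inclusion makes the whole inclusion $L_n(K_2)\subset\operatorname{int}(K_2)$ strict. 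Finally one scales, $K=cK_2$ with $c$ large, so that the translations $a_n$ are absorbed and $f_n(K)\subset\operatorname{int}(K)$. This telescoping-plus-scaling idea is the missing ingredient; your union/convex-hull constructions cannot substitute for it. (Your degenerate-case thickening argument has the same defect: a single $L_n$ may expand the normal $\delta$-collar, and ``passing to a further iterate'' changes the IFS you are trying to prove contractive.)
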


\begin{proof}
The proof of this theorem unfolds in three steps.

\begin{enumerate}
\item There exists a convex body $K_{1}$ and a positive integer $t$ with the
property that $\mathcal{F}^{\circ t}\left(  K_{1}\right)  \subset int \left(
K_{1}\right)  $.

\item The set $K_{1}$ is used to define a convex body $K_{2}$ such that $L_{n}
\left(  K_{2}\right)  \subset int \left(  K_{2}\right)  ,$ where $f_{n}(x) =
L_{n} x + a_{n}$ and $n = 1,2 , \dots,N$.

\item There is a positive constant $c$ such that the set $K = cK_{2}$ has the
property $\mathcal{F} \left(  K \right)  \subset int \left(  K \right)  $.
\end{enumerate}

\noindent Proof of Step (1). Let $A$ denote the attractor of\ $\mathcal{F}$.
Let $A_{\rho}=\{x\in\mathbb{R}^{m}:d_{\mathbb{H}}(\left\{  x\right\}
,A)\leq\rho\}$ denote the dilation of $A$ by radius $\rho>0$. Since we are
assuming $\lim_{k\rightarrow\infty}d_{\mathbb{H}}(\mathcal{F}^{\circ
k}(A_{\rho}),A) = 0, $ we can find an integer $t$ so that $d_{\mathbb{H}
}(\mathcal{F}^{\circ t}(A_{1}),A) < 1$. Thus,
\begin{equation}
\label{custom}\mathcal{F}^{\circ t}(A_{1})\subseteq int(A_{1}).
\end{equation}
\noindent If we let $K_{1} := conv\left(  A_{1}\right)  $, then
\begin{align*}
\mathcal{F}^{\circ t}\left(  K_{1}\right)   &  = \bigcup_{i_{1} \in\Omega}\;
\bigcup_{i_{2} \in\Omega} \cdots\bigcup_{i_{t} \in\Omega} (f_{i_{1}}\circ
f_{i_{2}} \circ\dots\circ f_{i_{t}}) \left(  conv(A_{1}) \right) \\
&  = \bigcup_{i_{1} \in\Omega}\; \bigcup_{i_{2} \in\Omega} \cdots
\bigcup_{i_{t} \in\Omega} conv\left(  f_{i_{1}}\circ f_{i_{2}} \circ\dots\circ
f_{i_{t}} ( A_{1} ) \right)  \quad\quad(\text{by Lemma~\ref{lin-conv}})\\
&  \subseteq\bigcup_{i_{1} \in\Omega}\; \bigcup_{i_{2} \in\Omega}
\cdots\bigcup_{i_{t} \in\Omega} conv\left(  int(A_{1}) \right)  = conv(
int(A_{1})) \quad(\text{by inclusion (5.2))}\\
&  \subseteq int(conv(A_{1})) = int\left(  K_{1}\right)  .
\end{align*}
This argument completes the proof of Step (1). \vskip 1mm \noindent Proof of
Step (2). Consider the set
\[
K_{2}:= \sum_{k=0}^{t-1} (conv(\mathcal{F}^{\circ k}(K _{1}) -
conv(\mathcal{F}^{\circ k}(K_{1}))).
\]

\noindent The set $K_{2}$ is a centrally symmetric convex body because it is a
finite Minkowski sum of centrally symmetric convex bodies. If any affine map
$f_{n}$ in $\mathcal{F}$ is written $f_{n}(x)=L_{n} x+a_{n},$ where
$L_{n}:\mathbb{R}^{m}\rightarrow\mathbb{R}^{m}$ denotes the linear part, then
\begin{align*}
L_{n}(K_{2})  &  = \sum_{k=0}^{t-1}L_{n} \Bigl( conv(\mathcal{F}^{\circ
k}(K_{1}) - conv(\mathcal{F}^{\circ k}(K_{1})) \Bigr)\quad\text{ (since }
L_{n}\text{ is a linear map)}\\
&  = \sum_{k=0}^{t-1} \Bigl( conv(L_{n}\left(  \mathcal{F}^{\circ k}\left(
K_{1}\right)  \right)  ) - conv(L_{n}\left(  \mathcal{F}^{\circ k}%
(K_{1})\right)  )\Bigr) \quad\text{(by Lemma~\ref{lin-conv})}\\
&  = \sum_{k=0}^{t-1}\Bigl( conv(f_{n}\left(  \mathcal{F}^{\circ k}\left(
K_{1}\right)  \right)  ) - conv(f_{n}\left(  \mathcal{F}^{\circ k}%
(K_{1})\right)  )\Bigr) \quad\text{ (since the }a_{n}\text{s cancel)}\\
&  \subseteq\sum_{k=0}^{t-1} \left(  conv(\mathcal{F}^{\circ\left(
k+1\right)  }\left(  K_{1}\right)  ) - conv(\mathcal{F}^{\circ\left(
k+1\right)  }(K_{1}))\right) \\
&  = \Bigl( conv(\mathcal{F}^{\circ t}(K_{1}) - conv(\mathcal{F}^{\circ
t}(K_{1})) \Bigr) + \sum_{k=1}^{t-1} \Bigl( conv(\mathcal{F}^{\circ k}(K_{1})
- conv(\mathcal{F}^{\circ k}(K_{1})) \Bigr)\\
&  \subseteq\left(  int\left(  K_{1}\right)  - int\left(  K_{1}\right)
\right)  + \sum_{k=1}^{t-1}(conv(\mathcal{F}^{\circ k}(K_{1})-
conv(\mathcal{F}^{\circ k}(K_{1})))\quad\text{ (by Step 1)}\\
&  = int(K_{2}).
\end{align*}

The second to last inclusion follows from the fact that \linebreak%
$f_{n}\left(  \mathcal{F}^{\circ k}\left(  K_{1}\right)  \right)
\subset\mathcal{F}^{\circ\left(  k+1\right)  }\left(  K_{1}\right)  $. The
last equality follows from the fact that if $\mathcal{O}$ and $\mathcal{C}$
are symmetric convex bodies in $\mathbb{R}^{m},$ then $int(\mathcal{O}%
)+\mathcal{C} = int\left(  \mathcal{O}+\mathcal{C}\right)  $. We have now
completed the proof of Step (2). \vskip 1mm

\noindent Proof of Step (3). It follows from Step (2) and the compactness of
$K_{2}$ that there is a constant $\alpha\in(0,1)$ such that $d_{K _{2}}%
(L_{n}(x),L_{n}(y)) < \alpha d_{K_{2}}(x,y)$ for all $x,y\in\mathbb{R}^{m}$
and all $n = 1,2, \dots, N$.

Let
\[
c > \frac{r}{\left(  1 - \alpha\right)  },
\]
where $r =\max\{ d_{K _{2}} ( a_{1},0), d_{K _{2}}( a_{2},0), \dots, d_{K
_{2}} ( a_{N},0) \}$. If $x \in cK_{2}$ and $f(x) = Lx +a$ is any function in
the IFS $\mathcal{F}$, then
\begin{align*}
\left\Vert f(x) \right\Vert _{K_{2}}  &  = d_{K_{2}}\left(  f\left(  x\right)
,0\right)  = d_{K_{2}}\left(  Lx+a,0\right)  \leq d_{K_{2}}\left(  Lx+a, Lx
\right)  + d_{K_{2}}\left(  Lx,0\right) \\
&  = d_{K_{2}}\left(  a,0\right)  + d_{K_{2}}\left(  Lx,0\right)
\ \ (\text{by Equation (\ref{minkowski})} )\\
&  < r + \alpha d_{K_{2}}\left(  x,0\right)  = r + \alpha\, \left\Vert x
\right\Vert _{K_{2}}\\
&  \leq r + \alpha c < (c - \alpha c) + \alpha c = c.
\end{align*}
This inequality shows that $\mathcal{F}\left(  cK_{2}\right)  \subset
int(cK_{2})$.
\end{proof}

\section{A Non-Antipodal Affine IFS is Hyperbolic}

Let $S^{m-1}\subset\mathbb{R}^{m}$ denote the unit sphere in $\mathbb{R}^{m}$.
For a convex body $K \subset\mathbb{R}^{m}$ and $u\in S^{m-1}$ there exists a
pair, $\left\{  H_{u},H_{-u}\right\}  ,$ of distinct supporting hyperplanes of
$K$, each orthogonal to $u$ and with the property that they both intersect
$\partial K$ but contain no points of the interior of $K$. Since by definition
a convex body has non-empty interior, this pair will be unique. The pair
$\left\{  H_{u},H_{-u}\right\}  $ is usually referred to as the two
\textit{supporting hyperplanes} of $K$ orthogonal to $u$. (See Moszynska
\cite{moszynska}, p.14.)

\begin{definition}
[Antipodal Pairs]\label{antipodalPointsDef} If $K \subset\mathbb{R}^{m}$ is a
convex body and $u \in S^{m-1},$ then define
\[
\begin{aligned}
\mathcal{A}_{u}&:=\mathcal{A}_{u}(K) = \left\{ \left(
p,q\right) \in\left( H _{u}\cap \partial K\right)
\times\left( H_{-u} \cap \partial K\right) \right\} \quad
\text{ and } \\
\mathcal{A} &:= \mathcal{A}(K)
=\bigcup_{u \in S^{m-1}} \mathcal{A}_{u}\text{.} \end{aligned}
\]
We say that $\left(  p,q\right)  $ is an \textit{antipodal pair} of points
with respect to $K$ if $\left(  p,q\right)  \in\mathcal{A}$.
\end{definition}

\begin{definition}
[Diametric Pairs]\label{diametricPointsDef} If $K \subset\mathbb{R}^{m}$ is a
convex body, and $u\mathbf{\in}{S^{m-1}},$ then define the \textit{diameter}
of $K$ \textit{in the direction }$u$ to be
\[
D(u)=\max\{\left\Vert x-y\right\Vert _{2} :x,y\mathbf{\in} K, x-y=\alpha
u,\alpha\in\mathbb{R}\}.
\]
The maximum is achieved at some pair of points belonging to $\partial K$
because $K\times K$ is convex and compact, and $\left\Vert x-y\right\Vert _{2}
$ is continuous for $(x,y)\in K \times K$. Now define
\[
\begin{aligned}
\mathcal{D}_{u}&=\{(p,q)\in\partial K\times\partial K
:D(u)=\left\Vert q-p\right\Vert_2 \} \quad \text{ and } \\
\mathcal{D}
&=\bigcup_{u \in S^{m-1}}\mathcal{D}_{u}\text{.} \end{aligned}
\]
We say that $\left(  p,q\right)  \in\mathcal{D}_{u}$ is a \textit{diametric
pair} of points in the direction of $u,$ and that $\mathcal{D}$ is the set of
diametric pairs of points of $K.$
\end{definition}

\begin{definition}
[Strictly Convex]A convex body $K$ is \textit{strictly convex} if, for every
two points$\ x,y\in K$, the open line segment joining $x$ and $y$ is contained
in the interior of $K$.
\end{definition}

We write $xy$ to denote the closed line segment with endpoints at $x$ and $y$
so that $y-x$ is the vector, in the direction from $x$ to $y,$ whose magnitude
is the length of $xy$.

\begin{theorem}
\label{keyADTheorem} If $K \subset\mathbb{R}^{m}$ is a convex body, then the
set of antipodal pairs of points of $K$ is the same as the set of diametric
pairs of points of $K,$ i.e.,
\[
\mathcal{A=D}.
\]

\end{theorem}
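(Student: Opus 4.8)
The plan is to prove both inclusions at once by showing that each of $\mathcal{A}$ and $\mathcal{D}$ equals
\[
\mathcal{E}:=\{(p,q)\in\partial K\times\partial K\ :\ q-p\in\partial(K-K)\},
\]
where $K-K$ is the centrally symmetric convex body of Proposition~\ref{centrallySymmetricProp} (being centrally symmetric, it has $0$ in its interior). Two elementary properties of $K-K$ do all the work. First, writing $h_{C}(u):=\max_{x\in C}\langle x,u\rangle$ for the support function of a convex body $C$, one has $h_{K-K}(u)=h_{K}(u)+h_{K}(-u)$, since support functions add over Minkowski sums and $h_{-K}(u)=h_{K}(-u)$; in particular this is symmetric under $u\mapsto -u$. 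Second, for a unit vector $v$ the number $D(v)$ is exactly the radial function of $K-K$ in the direction $v$, because $\alpha v\in K-K$ precisely when $K$ has a chord parallel to $v$ of signed length $\alpha$. Consequently, for distinct $p,q\in K$, the chord $pq$ is a longest chord of $K$ in its own direction, i.e. $\Vert q-p\Vert_{2}=D\bigl((q-p)/\Vert q-p\Vert_{2}\bigr)$, if and only if $q-p\in\partial(K-K)$: if $q-p$ were interior then $\alpha(q-p)\in K-K$ for some $\alpha>1$, yielding a strictly longer parallel chord; conversely $q-p\in K-K$ always gives a parallel chord of length $\Vert q-p\Vert_{2}$, and a point lying strictly between the interior point $0$ and a more distant point of $K-K$ is itself interior to $K-K$.

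Granting these facts, $\mathcal{D}=\mathcal{E}$ is immediate: $(p,q)\in\mathcal{D}_{u}$ means $p,q\in\partial K$, $q-p$ is parallel to $u$, and $\Vert q-p\Vert_{2}=D(u)$ (reading, as the phrase ``in the direction of $u$'' intends, the requirement that $q-p$ be parallel to $u$), and since $D(u)=D(-u)$ this says exactly that $pq$ is a longest chord in its own direction, i.e. $q-p\in\partial(K-K)$. For $\mathcal{A}\subseteq\mathcal{E}$: if $(p,q)\in\mathcal{A}_{u}$ then $p\in H_{u}$ and $q\in H_{-u}$ give $\langle p,u\rangle=h_{K}(u)$ and $\langle q,-u\rangle=h_{K}(-u)$, so $\langle p-q,u\rangle=h_{K}(u)+h_{K}(-u)=h_{K-K}(u)$; thus $p-q$ attains the support function of $K-K$ in direction $u$ and hence lies on $\partial(K-K)$, whence $q-p\in\partial(K-K)$ by central symmetry, while $p,q\in\partial K$ by the definition of $\mathcal{A}_{u}$. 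For $\mathcal{E}\subseteq\mathcal{A}$: given $p,q\in\partial K$ with $q-p\in\partial(K-K)$, choose a supporting hyperplane of $K-K$ at $q-p$ with outward unit normal $w$, so $\langle q-p,w\rangle=h_{K-K}(w)=h_{K}(w)+h_{K}(-w)$; since $\langle q,w\rangle\le h_{K}(w)$ and $\langle -p,w\rangle\le h_{K}(-w)$ while these two inequalities sum to an equality, each must be an equality, so $q\in H_{w}\cap\partial K$ and $p\in H_{-w}\cap\partial K$, i.e. $(p,q)\in\mathcal{A}_{-w}\subseteq\mathcal{A}$. Hence $\mathcal{A}=\mathcal{E}=\mathcal{D}$.

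I expect the delicate points to be bookkeeping rather than depth: first, pinning down which of $H_{u},H_{-u}$ carries which outward normal so that the signs in the support-function identity come out right (so that one lands on $\partial(K-K)$ and not in its interior); second, the equivalence ``$pq$ is a longest parallel chord $\iff q-p\in\partial(K-K)$'', which relies on $0$ being interior to $K-K$ so that the radial and supporting descriptions of $\partial(K-K)$ are both available. Degenerate cases cause no trouble: since $K$ has nonempty interior, $H_{u}\neq H_{-u}$ for every $u$, each $D(u)$ is positive, and $0\notin\partial(K-K)$, so neither an antipodal nor a diametric pair can have $p=q$. (As an aside, $\mathcal{D}\subseteq\mathcal{A}$ can also be shown without $K-K$, by separating the convex bodies $K$ and $K-(q-p)$, which have disjoint interiors, by a hyperplane and verifying that it supports $K$ at $p$ and, after translation, at $q$.)
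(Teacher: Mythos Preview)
Your proof is correct and takes a genuinely different route from the paper's. The paper argues $\mathcal{A}\subseteq\mathcal{D}$ by the obvious geometric observation that any chord parallel to an antipodal chord lies in the slab between $H_{u}$ and $H_{-u}$, but for $\mathcal{D}\subseteq\mathcal{A}$ it first treats strictly convex $K$ by a topological argument---the map $u\mapsto (x_{u}-x_{-u})/\Vert x_{u}-x_{-u}\Vert_{2}$ on $S^{m-1}$ sends no point to its antipode, hence has degree~$1$ and is surjective, forcing every direction to be an antipodal direction---and then passes to general $K$ by a limiting argument through strictly convex approximations. Your approach, by contrast, is purely elementary convex geometry: you pass to the difference body $K-K$, identify $\mathcal{D}$ with pairs whose difference lies on $\partial(K-K)$ via the radial function, and identify $\mathcal{A}$ with the same set via the support-function identity $h_{K-K}(u)=h_{K}(u)+h_{K}(-u)$ together with the standard ``two inequalities summing to an equality'' trick. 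This avoids both the degree argument and the approximation step, and handles all convex bodies uniformly. The paper's proof has the virtue of being more visual in the strictly convex case, but yours is shorter, self-contained, and arguably closer in spirit to how the body $K-K$ is used in the rest of the paper (Theorems~\ref{3Implies4Theorem} and~\ref{5Implies1Theorem}). Your parenthetical note on reading $\mathcal{D}_{u}$ as requiring $q-p$ parallel to $u$ is the correct intended reading.
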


\begin{proof}
First we show that $\mathcal{A}\subseteq\mathcal{D}$. If $(p,q)\in
\mathcal{A},$ then $p\in H_{u} \cap\partial K$ and $q\in H_{-u} \cap\partial
K$ for some $u\in S^{m-1}$. Clearly any chord of $K$ parallel to $pq$ lies
entirely in the region between $H_{u}$ and $H_{-u}$ and therefore cannot have
length greater than that of $pq$. So $D(q-p)=\left\Vert q-p\right\Vert $ and
$\left(  p,q\right)  \in\mathcal{D}_{q-p}\subseteq\mathcal{D}$. Note, for use
later in the proof, that if $K$ is strictly convex, then $pq$ is the unique
chord of maximum length in its direction.

Conversely, to show that $\mathcal{D}\subseteq\mathcal{A}$, first consider the
case where $K$ is a strictly convex body. For each $u\in S^{m-1},$ consider
the points $x_{u}\in H_{u}\cap\partial K$ and $x_{-u}\in H_{-u}\cap\partial
K$. The continuous function $f:S^{m-1}\rightarrow S^{m-1}$ defined by
\[
f(u)=\frac{x_{u}-x_{-u}}{\left\Vert x_{u}-x_{-u}\right\Vert _{2} }
\]
has the property that $\langle f(u),u\rangle>0$ for all $u$. In other words,
the angle between $u$ and $f(u)$ is less than $\frac{\pi}{2}$. But it is an
elementary exercise in topology (see, for example, Munkres \cite{munkres},
problem 10, page 367) that if $f:S^{m-1}\rightarrow S^{m-1}$ maps no point $x$
to its antipode $-x$, then $f$ has degree $1$ and, in particular, is
surjective. To show that $\mathcal{D}\subseteq\mathcal{A}$, let $(p,q)\in
\mathcal{D}_{v}$ for some $v\in S^{m-1}$. By the surjectivity of $f$ there is
$u\in S^{m-1}$ such that $f(u)=v$. According to the last sentence of the
previous paragraph, $x_{u}x_{-u}$ is the unique longest chord parallel to $v$.
Therefore $p=x_{u}$ and $q=x_{-u}$ and consequently $(p,q)\in\mathcal{A}_{u}$.

The case where $K$ is not strictly convex is treated by a standard limiting
argument. Given a vector $v\in S^{m-1}$ and a longest chord $pq$ parallel to
$v$, we must prove that $(p,q)\in\mathcal{A}$. Since $K$ is the intersection
of all strictly convex bodies containing $K$, there is a sequence $\left\{
K_{k}\right\}  $ of strictly convex bodies containing $K$ with the following
two properties.\medskip

1. There is a longest chord $p_{k}q_{k}$ of $K_{k}$ parallel to $u$ such that
$\lim_{k\rightarrow\infty}\left\Vert p_{k}-q_{k}\right\Vert _{2}=\left\Vert
p-q\right\Vert _{2},$ and the limits $\lim_{k\rightarrow\infty}p_{k}=\tilde
{p}\in K$ and $\lim_{k\rightarrow\infty}q_{k}=\tilde{q}\in K$ exist.\medskip

By the result \ for the strictly convex case, there is a sequence of vectors
$u_{k}\in S^{m-1}$ such that $p_{k}= K_{k}\cap H_{u_{k}}(K_{k})$ and
$q_{k}=K_{k}\cap H_{-u_{k}}(K_{k})$. By perhaps going to a subsequence\medskip

2. $\lim_{k\rightarrow\infty}u_{k}=u\in S^{m-1}$ exists.\medskip

It follows from item 1 that $\left\Vert \tilde{p}-\tilde{q}\right\Vert _{2}
=\left\Vert p-q\right\Vert _{2} $ and $\tilde{p}-\tilde{q}$ is parallel to
$v$. Therefore, $\tilde{p}\tilde{q}$ as well as $pq$, are longest chords of
$K$ parallel to $v$. It follows from 2 that if $H$ and $H^{\prime}$ are the
hyperplanes orthogonal to $u$ through $\tilde{p}$ and $\tilde{q}$
respectively, then $H$ and $H^{\prime}$ are parallel supporting hyperplanes of
$K$. Therefore, necessarily $p\in H$ and $q\in H^{\prime}$, and consequently
$(p,q)\in\mathcal{A}_{u}\subset\mathcal{A}$.
\end{proof}

\begin{definition}
[Non-Antipodal IFS]\label{nonantipodalIFSDef} If $K \subset\mathbb{R}^{m}$ is
a convex body, then $f:\mathbb{R}^{m}\rightarrow\mathbb{R}^{m}$ is
\textit{non-antipodal} with respect to $K$ if $f(K )\subseteq K$ and $\left(
x,y\right)  \in\mathcal{A}\left(  K\right)  $ implies $\left(
f(x),f(y)\right)  \notin\mathcal{A} \left(  K\right)  $. If $\mathcal{F} =
\{\mathbb{R}^{m}; f_{1},f_{2},...,f_{N}\}$ is an iterated function system with
the property that each $f_{n}$ is non-antipodal with respect to $K,$ then
$\mathcal{F}$ is called \textit{non-antipodal} with respect to $K$.
\end{definition}

The next proposition gives the implication $(4) \Rightarrow(5)$ in
Theorem~\ref{maintheorem}. The proof is clear.

\begin{proposition}
[A Topological Contraction is Non-Antipodal]%
\label{topContractImpliesnonAntpodalProp} If \linebreak$\mathcal{F} =
\{\mathbb{R}^{m}; f_{1},f_{2},...,f_{N}\}$ is an affine iterated function
system with the property that there exists a convex body $K\subset
\mathbb{R}^{m}$ such that $f_{n}(K)\subset int(K) $ for all $n = 1,2 ,
\dots,n,$ then $\mathcal{F} $ is non-antipodal with respect to $K$.
\end{proposition}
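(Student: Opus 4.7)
The plan is to unpack the definitions and observe that the result reduces to a trivial boundary/interior dichotomy. By Definition \ref{antipodalPointsDef}, a pair $(p,q) \in \mathcal{A}(K)$ must lie in $(H_u \cap \partial K) \times (H_{-u} \cap \partial K)$ for some $u \in S^{m-1}$. In particular, both coordinates of any antipodal pair are required to lie on the boundary $\partial K$. This is the only feature of the definition that I will need.

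First I would verify the inclusion condition $f_n(K) \subseteq K$ required by Definition \ref{nonantipodalIFSDef}; this is immediate from $f_n(K) \subset \text{int}(K) \subseteq K$. Next, given an arbitrary antipodal pair $(x,y) \in \mathcal{A}(K)$, I would note that $x, y \in \partial K \subseteq K$, so applying $f_n$ yields $f_n(x), f_n(y) \in f_n(K) \subset \text{int}(K)$. Since $\text{int}(K) \cap \partial K = \emptyset$, neither $f_n(x)$ nor $f_n(y)$ can lie on $\partial K$, and hence $(f_n(x), f_n(y)) \notin \mathcal{A}_u(K)$ for any $u \in S^{m-1}$. Taking the union over $u$ gives $(f_n(x), f_n(y)) \notin \mathcal{A}(K)$, as required. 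Since this argument works for every $n$, the IFS $\mathcal{F}$ is non-antipodal with respect to $K$.

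There is essentially no obstacle here: the proposition is a formal consequence of the fact that antipodal points in the sense of the paper are defined via supporting hyperplanes and therefore necessarily lie on $\partial K$, while the topological contraction hypothesis pushes everything strictly into $\text{int}(K)$. Notably, affineness of the $f_n$ is not used at all in this step; the conclusion would hold for any continuous maps satisfying $f_n(K) \subset \text{int}(K)$. This matches the authors' remark that ``the proof is clear''.
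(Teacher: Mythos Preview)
Your argument is correct and is exactly what the paper intends: the authors simply write ``The proof is clear,'' and your unpacking of the definitions---antipodal pairs live on $\partial K$, while the hypothesis forces images into $\mathrm{int}(K)$---is precisely the trivial observation behind that remark. Your note that affineness is irrelevant for this step is also accurate.
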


The next theorem provides the implication that $(5) \Rightarrow(1)$ in
Theorem~\ref{maintheorem}.

\begin{theorem}
\label{5Implies1Theorem} If the affine IFS $\mathcal{F} = \left(
\mathbb{R}^{m}; f_{1} ,f_{2},...,f_{N}\right)  $ is non-antipodal with respect
to a convex body $K$, then $\mathcal{F}$ is hyperbolic.
\end{theorem}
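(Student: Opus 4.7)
The plan is to construct an explicit centrally symmetric convex body $K'$ from $K$ whose Minkowski metric is Lipschitz equivalent to $d_{E}$ and makes each $f_{n}$ a contraction. Set $K' = K - K$; by Proposition \ref{centrallySymmetricProp} this is a centrally symmetric convex body, and by Proposition \ref{lipschitzProp} its Minkowski metric $d_{K'}$ is Lipschitz equivalent to $d_{E}$. Writing $f_{n}(x) = L_{n} x + a_{n}$, we have $d_{K'}(f_{n}(x),f_{n}(y)) = \|L_{n}(x-y)\|_{K'}$, so it suffices to prove that the operator norm of each $L_{n}$ with respect to $\|\cdot\|_{K'}$ is strictly less than $1$.

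The key identity linking $K'$ to diametric pairs in $K$ is that for any nonzero $v \in \mathbb{R}^{m}$ with direction $u = v/\|v\|_{2}$, the Minkowski functional satisfies $\|v\|_{K'} = \|v\|_{2}/D(u)$. This follows straight from the definitions: $v \in \lambda K'$ iff there is a chord $a - b$ of $K$ in direction $u$ with length $\|v\|_{2}/\lambda$, and such chords exist precisely when $\|v\|_{2}/\lambda \leq D(u)$. Consequently $v \in \partial K'$ iff $\|v\|_{2} = D(u)$, and in that case the endpoints of any longest chord of $K$ in direction $u$ furnish a decomposition $v = p - q$ with $(p,q) \in \mathcal{D}(K)$; by Theorem \ref{keyADTheorem}, this same pair lies in $\mathcal{A}(K)$. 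Since $f_{n}(K) \subseteq K$, the vector $L_{n} v = f_{n}(p) - f_{n}(q)$ lies in $K'$, so $\|L_{n} v\|_{K'} \leq 1$, and the non-antipodal hypothesis rules out equality: $(f_{n}(p),f_{n}(q)) \notin \mathcal{A}(K) = \mathcal{D}(K)$ forces $\|L_{n} v\|_{K'} < 1$.

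The remaining step is a routine compactness argument. The map $v \mapsto \|L_{n} v\|_{K'}$ is continuous and $\partial K'$ is compact, so the pointwise strict inequality upgrades to $\alpha_{n} := \sup_{v \in \partial K'} \|L_{n} v\|_{K'} < 1$. Setting $\alpha = \max_{n} \alpha_{n} < 1$ and exploiting homogeneity of $\|\cdot\|_{K'}$, we obtain $d_{K'}(f_{n}(x),f_{n}(y)) \leq \alpha\, d_{K'}(x,y)$ for all $n$ and all $x,y \in \mathbb{R}^{m}$, which combined with the Lipschitz equivalence $d_{K'} \sim d_{E}$ proves that $\mathcal{F}$ is hyperbolic. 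I expect the main obstacle to be the second paragraph: identifying $\partial K'$ with the set of vector differences of diametric pairs of $K$, so that Theorem \ref{keyADTheorem} becomes the bridge converting the non-antipodal hypothesis into a strict operator-norm bound. Once that translation is in hand, the compactness upgrade and the passage through Lipschitz equivalence are standard.
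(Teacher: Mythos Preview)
Your proposal is correct and follows essentially the same approach as the paper: form the difference body $K'=K-K$, use Theorem~\ref{keyADTheorem} to convert the non-antipodal hypothesis into the strict containment $L_n(\partial K')\subset \operatorname{int}(K')$, and then invoke compactness to extract a uniform contraction constant for the Minkowski metric $d_{K'}$. Your explicit identity $\|v\|_{K'}=\|v\|_2/D(u)$ makes the correspondence between $\partial K'$ and diametric pairs of $K$ slightly more transparent than the paper's phrasing, but the logical structure is the same.
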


\begin{proof}
Assume that $K$ is a convex body such that $f$ is non-antipodal with respect
to $K$ for all $f \in\mathcal{F}$. Let $C = K-K$ and let $f(x) = Lx + a
\in\mathcal{F}$, where $L$ is the linear part of $f$. By
Proposition~\ref{centrallySymmetricProp}, the set $C$ is a centrally symmetric
convex body and
\[
L(C) = L(K)-L(K) = f(K) - f(K)\subseteq K-K = C.
\]
\vskip 2mm

We claim that $L(C) \subset int(C)$. Since $C$ is compact and $L$ is linear,
to prove the claim it is sufficient to show that $L(x) \notin\partial C$ for
all $x \in\partial C$. By way of contradiction, assume that $x\in\partial C$
and $L(x) \in\partial C$. Then the vector $x$ is a longest vector in $C$ in
its direction. Since $x\in C= K-K$ there are $x_{1}, x_{2} \in\partial K$ such
that $x = x_{1}-x_{2}$, and $(x_{1},x_{2}) \in{\mathcal{D}}(K) = {\mathcal{A}%
}(K)$, where the last equality is by Theorem~\ref{keyADTheorem}. So
$(x_{1},x_{2})$ is an antipodal pair with respect to $K$. Likewise, since $Lx$
is a longest vector in $C$ in its direction, there are $y_{1}, y_{2}
\in\partial K$ such that $Lx = y_{1}-y_{2}$, and $(y_{1},y_{2}) \in
{\mathcal{D}}(K) = {\mathcal{A}}(K)$. Therefore
\[
f(x_{2}) - f(x_{1}) = L(x_{2}) - L(x_{1}) = L(x_{2}-x_{1}) = Lx = y_{1}%
-y_{2},
\]
which implies that $(f_{n}(x_{1}),f_{n}(x_{2})) \in{\mathcal{D}}(K) =
{\mathcal{A}} (K)$, contradicting that $f$ is non-antipodal with respect to
$K$.

If $d_{C}$ denotes the Minkowski metric with respect to the centrally
symmetric convex body $C$, then by Proposition~\ref{minkowskiNormProp} $C$ is
the unit ball centered at the origin with respect to this metric. Since $C$ is
compact, the containment $L(C) \subset int(C)$ implies that there is an
$\alpha\in[0,1)$ such that $\left\Vert Lx \right\Vert _{C} < \alpha\,
\left\Vert x \right\Vert _{C}$ for all $x \in\mathbb{R}^{m}$. Then
\[
\begin{aligned}
d_{C}(f(x),f(y))
&= \left\Vert f(x) - f(y) \right\Vert_{C} =
\left\Vert Lx - Ly \right\Vert_{C} \\
&=
\left\Vert L(x-y)\right\Vert_{C} <
\alpha \, \left\Vert x-y\right\Vert_{C} =
\alpha \, d_{C}(x,y).
\end{aligned}
\]
Therefore $d_{C}$ is a metric for which each function in the IFS is a
contraction. By Proposition~\ref{lipschitzProp}, $d_{C}$ is Lipschitz
equivalent to the standard metric.
\end{proof}

\section{An Answer to the Question of Kameyama}

We now turn to the proof of Theorem~\ref{kameyamaTheorem}, the theorem that
settles the question of Kameyama. If $X \subseteq\mathbb{R}^{m}$ and
$\mathcal{F} = \left(  X; f_{1} ,f_{2},...,f_{N}\right)  $ is an IFS on $X$,
then the definitions of \textit{coding map} and \textit{point-fibred} for
$\mathcal{F}$ are exactly the same as Definitions~\ref{codingMapDef} and
\ref{point-fibredDef}, with $\mathbb{R}^{m}$ replaced by $X$. The proof of
Theorem~\ref{kameyamaTheorem} requires the following proposition.

\begin{proposition}
\label{lem1} If $X \subseteq\mathbb{R}^{m}$ and $\mathcal{F} = \left(  X;
f_{1} ,f_{2},...,f_{N}\right)  $ is an IFS with a coding map $\pi:
\Omega\rightarrow\mathbb{R}^{m}$ such that $\pi(\Omega)=X,$ then $\mathcal{F}$
is point-fibred on $X$.
\end{proposition}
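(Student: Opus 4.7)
The plan is to exploit the commutative diagram in the definition of a coding map to rewrite $f_{\sigma|k}(x)$ entirely in terms of $\pi$, and then conclude by using continuity of $\pi$ together with the product topology on $\Omega$.

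First I would iterate the commutative diagram \ref{commutediagram}. For each $n$ the identity $f_n \circ \pi = \pi \circ s_n$ holds, so by induction on $k$ one obtains
\[
f_{\sigma_1} \circ f_{\sigma_2} \circ \cdots \circ f_{\sigma_k} \circ \pi \;=\; \pi \circ s_{\sigma_1} \circ s_{\sigma_2} \circ \cdots \circ s_{\sigma_k},
\]
and $s_{\sigma_1}\circ\cdots\circ s_{\sigma_k}$ is just prepending the word $\sigma_1\sigma_2\cdots\sigma_k$ to any element of $\Omega$.

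Next, I would use the hypothesis $\pi(\Omega)=X$ to write an arbitrary $x \in X$ as $x=\pi(\tau)$ for some $\tau\in\Omega$. Then
\[
f_{\sigma|k}(x) \;=\; f_{\sigma_1}\circ\cdots\circ f_{\sigma_k}(\pi(\tau)) \;=\; \pi(\sigma_1\sigma_2\cdots\sigma_k\tau).
\]
Regardless of $\tau$, the sequence $\{\sigma_1\sigma_2\cdots\sigma_k\tau\}_{k=1}^{\infty}$ converges to $\sigma$ in the product topology on $\Omega$, since its first $k$ coordinates agree with those of $\sigma$. By the continuity of the coding map $\pi$, this yields
\[
\lim_{k\to\infty} f_{\sigma|k}(x) \;=\; \lim_{k\to\infty}\pi(\sigma_1\cdots\sigma_k\tau) \;=\; \pi(\sigma),
\]
and the right-hand side is independent of the chosen $\tau$, hence independent of $x\in X$. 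This is precisely the content of Definition~\ref{point-fibredDef}, and since $\pi$ is by hypothesis already a coding map, $\mathcal{F}$ is point-fibred on $X$.

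There is no real obstacle here: everything follows formally from the commutativity of the diagram, surjectivity of $\pi$ onto $X$, and continuity of $\pi$ on the compact Hausdorff space $\Omega$. The only small subtlety to be careful about is that one must use $\pi(\Omega)=X$ to represent $x$ as $\pi(\tau)$ — without this surjectivity assumption the pointwise limit could not be read off directly from the diagram.
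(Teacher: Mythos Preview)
Your proof is correct and follows essentially the same route as the paper: iterate the commuting diagram to get $f_{\sigma|k}(\pi(\tau))=\pi(s_{\sigma_1}\circ\cdots\circ s_{\sigma_k}(\tau))$, use surjectivity $\pi(\Omega)=X$ to write $x=\pi(\tau)$, and then invoke continuity of $\pi$ together with $\sigma_1\cdots\sigma_k\tau\to\sigma$ in $\Omega$ to conclude that the limit equals $\pi(\sigma)$ independently of $x$.
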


\begin{proof}
By Definition \ref{point-fibredDef}, we must show that $\lim_{k\rightarrow
\infty}f_{\sigma_{1}}\circ f_{\sigma_{2}} \circ\cdots\circ f_{\sigma_{k}}(x) $
exists, is independent of $x \in X$, and is continuous as a function of
$\sigma= \sigma_{1}\sigma_{2} \cdots\in\Omega$. We will actually show that
$\lim_{k\rightarrow\infty}f_{\sigma_{1}}\circ f_{\sigma_{2}} \circ\cdots\circ
f_{\sigma_{k}}(x) = \pi(\sigma) $.

Since $\pi$ is a coding map, we know by Definition \ref{codingMapDef} that
$f_{n} \circ\pi(\sigma) = \pi\circ s_{n} (\sigma),$ for all $n =1, 2, \dots,
N$. By assumption, if $x$ is any point in $X,$ then there is a $\tau\in\Omega$
such that $\pi(\tau) = x$. Thus
\begin{align*}
\lim_{k\rightarrow\infty}f_{\sigma_{1}}\circ f_{\sigma_{2}}\circ\cdots\circ
f_{\sigma_{k}} (x)  &  \mathbb{=}\lim_{k\rightarrow\infty} f_{\sigma_{1}}\circ
f_{\sigma_{2}}\circ\cdots\circ f_{\sigma_{k}}(\pi(\tau)\mathbb{)}\text{ (since
} \pi(\tau) = x)\\
&  = \lim_{k\rightarrow\infty}\pi(s_{\sigma_{1}}\circ s_{\sigma_{2}}
\circ...\circ s_{\sigma_{k}} \circ\tau) \text{ (by Diagram
\ref{commutediagram})}\\
&  = \pi(\lim_{k\rightarrow\infty}s_{\sigma_{1}}\circ s_{\sigma_{2}}
\circ...\circ s_{\sigma_{k}} \circ\tau) \text{ (since $\pi$ is continuous)}\\
&  = \pi(\sigma).
\end{align*}

\end{proof}

\begin{theorem}
\label{kameyamatheorem} If $\mathcal{F}= \left(  \mathbb{R}^{m}; f_{1}
,f_{2},...,f_{N}\right)  $ is an affine IFS with a coding map $\pi:
\Omega\rightarrow X$, then $\mathcal{F}$ is point-fibred when restricted to
the affine hull of $\pi(\Omega)$. In particular, if $\pi(\Omega)$ contains a
non-empty open subset of $\mathbb{R}^{m},$ then $\mathcal{F}$ is point-fibred
on $\mathbb{R}^{m}$.
\end{theorem}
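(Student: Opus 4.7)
The plan is to let $V := \text{aff}(\pi(\Omega))$ and verify that the restricted system $(V; f_1|_V, \ldots, f_N|_V)$ satisfies Definition~\ref{point-fibredDef}. First I would establish invariance: the commutative diagram~\ref{commutediagram} gives $f_n(\pi(\Omega)) = \pi(s_n(\Omega)) \subseteq \pi(\Omega)$, and since $f_n$ is affine, $f_n(V) = \text{aff}(f_n(\pi(\Omega))) \subseteq V$. Hence each $f_n$ restricts to an affine self-map of $V$, and the original $\pi$, now regarded as a map $\Omega \to V$, remains a coding map for the restricted IFS. Proposition~\ref{lem1} applied with $X := \pi(\Omega)$ (invariant under $\mathcal{F}$ and by construction the image of $\pi$) then yields $\lim_{k \to \infty} f_{\sigma|k}(x) = \pi(\sigma)$ for every $\sigma \in \Omega$ and every $x \in \pi(\Omega)$.

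The main step, and essentially the only nontrivial point, is upgrading this convergence from $x \in \pi(\Omega)$ to arbitrary $v \in V$. I would write $v$ as a finite affine combination $v = \sum_{i=0}^{d} \lambda_i\, x_i$ with $x_i \in \pi(\Omega)$ and $\sum_i \lambda_i = 1$; such a representation exists because $V$ is by definition the affine hull of $\pi(\Omega)$. Since each composition $f_{\sigma|k}$ is affine, it commutes with affine combinations, so termwise convergence together with $\sum_i \lambda_i = 1$ gives
\[
f_{\sigma|k}(v) = \sum_{i=0}^{d} \lambda_i\, f_{\sigma|k}(x_i) \longrightarrow \sum_{i=0}^{d} \lambda_i\, \pi(\sigma) = \pi(\sigma).
\]
Thus the limit exists and is independent of $v \in V$, so $\mathcal{F}|_V$ is point-fibred. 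Note that no metric control or uniformity is needed here: convergence of a fixed finite affine combination follows immediately from the componentwise convergence supplied by Proposition~\ref{lem1}.

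For the \emph{in particular} assertion, if $\pi(\Omega)$ contains a non-empty open subset of $\mathbb{R}^m$ then its affine hull is all of $\mathbb{R}^m$, so $V = \mathbb{R}^m$ and $\mathcal{F}$ is point-fibred on $\mathbb{R}^m$ directly.
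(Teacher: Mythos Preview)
Your proposal is correct and follows essentially the same route as the paper: apply Proposition~\ref{lem1} on $X=\pi(\Omega)$ to get convergence to $\pi(\sigma)$ for points of $\pi(\Omega)$, then extend to the affine hull by writing an arbitrary point as a finite affine combination and using that compositions of affine maps preserve such combinations. The only cosmetic difference is that you make the invariance $f_n(V)\subseteq V$ explicit here, whereas the paper defers that observation to the proof of Theorem~\ref{kameyamaTheorem}.
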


\begin{proof}
Let $A:= \pi(\Omega)$. Since $f_{n}(A) \subseteq A$ for all $n$, the
restriction of the IFS $\mathcal{F}$ to $A$, namely $\mathcal{F}|_{A} := (A ;
f_{1}, f_{2}, \dots, f_{N})$, is well defined. It follows from
Proposition~\ref{lem1} that $\mathcal{F}|_{A}$ is point-fibred and, because
the coding map for a point-fibred IFS is unique,
\[
\pi(\sigma) = \lim_{k\rightarrow\infty}f_{\sigma_{1}}\circ f_{\sigma_{2}}%
\circ\cdots\circ f_{\sigma_{k}}(a)\label{domeq}%
\]
for $\left(  \sigma,a\right)  \in\Omega\times A$. It only remains to show that
the restriction $\mathcal{F}|_{\text{aff}( A )} := (\text{aff}( A ) ; f_{1},
f_{2}, \dots, f_{N})$ of the affine IFS $\mathcal{F}$ to the affine hull of
$A$ is point-fibred.

Let $x \in\text{aff}( A )$, the affine hull of $A$. It is well known that any
point in the affine hull can be expressed as a sum, $x=\sum_{p=0}^{m}%
\lambda_{p}a_{p}$ for some $\lambda_{0},\lambda_{1},...,\lambda_{m}%
\in\mathbb{R}$ such that $\sum_{p=0}^{m}\lambda_{p}=1$ and $a_{0}%
,a_{1},...,a_{m}\in A$. Hence, for $\left(  \sigma,x\right)  \in\Omega
\times\text{aff}( A)$,
\begin{align*}
\lim_{k\rightarrow\infty}f_{\sigma_{1}}\circ f_{\sigma_{2}}\circ\cdots\circ
f_{\sigma_{k}}(x)  &  =\lim_{k\rightarrow\infty}f_{\sigma_{1}}\circ
f_{\sigma_{2}}\circ\cdots\circ f_{\sigma_{k}}(\sum_{p=0}^{m}\lambda_{p}%
a_{p})\text{, }\\
&  =\lim_{k\rightarrow\infty}\sum_{p=0}^{m}\lambda_{p}f_{\sigma_{1}}\circ
f_{\sigma_{2}}\circ\cdots\circ f_{\sigma_{k}}(a_{p})\\
&  =\sum_{p=0}^{m}\lambda_{p}\pi(\sigma)=\pi(\sigma).
\end{align*}

\end{proof}

Theorem~\ref{kameyamaTheorem} now follows easily from
Theorem~\ref{kameyamatheorem} and Theorem~\ref{maintheorem}.

\begin{proof}
(of Theorem~\ref{kameyamaTheorem}) Let $A:= \pi(\Omega)$ and let $\dim\,
\text{aff}( A ) = k \leq m$. It is easy to check from the commuting
diagram~\ref{commutediagram} that $f(A) \subseteq A$ for each $f\in
\mathcal{F}$ implies that $f( \text{aff}( A )) \subseteq\text{aff}( A )$ for
each $f\in\mathcal{F}$. Since $\text{aff}( A )$ is isomorphic to
$\mathbb{R}^{k}$, Theorem~\ref{maintheorem} can be applied to the IFS
$\mathcal{F}|_{\text{aff}( A )} := (\text{aff}( A ); f_{1},f_{2},...f_{N} )$
to conclude that, since it is point-fibred, $\mathcal{F}|_{\text{aff}( A )}$
is also hyperbolic.
\end{proof}

Note that the IFS $(\mathbb{R};f)$, where $f(x)=2x+1$, is not hyperbolic on
$\mathbb{R}$, but it is hyperbolic on the affine subspace $\{-1\}\subset
\mathbb{R}$.

\section{Concluding Remarks}

Recently it has come to our attention that another condition, equivalent to
conditions $(1)-(5)$ in our main result, Theorem 1.1, is $(6)$ $\mathcal{F}%
$\textit{ has joint spectral radius less than one}. (We define the joint
spectral radius (JSR) of an affine IFS to be the joint spectral radius of the
set of linear factors of its maps.) This information is important because it
connects our approach to the rapidly growing literature about JSR, see for
example \cite{berger}, \cite{daubechies}, and works that refer to these.

Since Example 3.3 and the results presented by Blondel, Theys, and Vladimirov
\cite{theys} indicate there is no general fast algorithm which will determine
whether or not the joint spectral radius of an IFS\ is less than one, we feel
that Theorem 1.1 is important because it provides an easily testable condition
that an IFS\ has a unique attractor. In particular, the topologically
contractive and non-antipodal conditions (conditions 4 and 5) provide
geometric/visual tests, which can easily be checked for any affine IFS. In
addition to yielding the existence of an attractor, these two conditions also
provide information concerning the location of the attractor. (For example,
the attractor is a subset of a particular convex body.) We also anticipate
that Theorem 1.1 can be generalized into other broader classes of functions,
where the techniques developed for the theory of joint spectal radius will not apply.


\end{document}